\newtheorem{theorem}{Theorem}
\newtheorem{proposition}{Proposition}
\newtheorem{lemma}{Lemma}
\newtheorem{definition}{Definition}
\title{\LARGE \bf
Identification of Parameters and Initial Values for Reaction-Diffusion Systems in Protein Networks (Extended Version)}
\author{Insoon Yang 
\thanks{I. Yang is with the Department of Electrical Engineering and Computer Sciences, University of California, Berkeley, CA 94720, USA
        {\tt\small iyang@eecs.berkeley.edu}}%
\and Claire J. Tomlin
\thanks{C. J. Tomlin is with the Department of Electrical Engineering and Computer Sciences, University of California, Berkeley, CA 94720, USA and the Life Sciences Division, Lawrence Berkeley National Laboratory, Berkeley, CA 94720, USA
        {\tt\small tomlin@eecs.berkeley.edu}}%
}
\date{}
\begin{document}

\maketitle
\pagestyle{plain}

\begin{abstract}
Spatio-temporal biochemical signaling in a large class of protein-protein interaction networks is well modeled by a reaction-diffusion system. The global existence of the solution to the reaction-diffusion system is determined by the reaction kinetics model and the protein network topology. We propose a novel reaction kinetics model that guarantees that the reaction-diffusion system with this model has a nonnegative invariant global classical solution for any network topology.
We then present a computational method to identify the unknown parameters and initial values for a reaction-diffusion system with this reaction kinetics model. 
The identification approach solves an optimization problem that minimizes the cost function defined as the $L^2$-norm of the difference between the data and the solution of the reaction-diffusion system. 
We utilize an adjoint-based optimal control method to obtain the gradients of the cost function with respect to the parameters and initial values. 
The regularity of the global classical solutions of the reaction-diffusion system and its corresponding adjoint system avoids situations in which the gradients blow up, and therefore guarantees the success of the identification method for any network structure.
Utilizing this gradient information, an efficient algorithm to solve the optimization problem is proposed and applied to estimate the mass diffusivities, rate constants and initial values of a reaction-diffusion system that models protein-protein interactions in a signaling network that regulates the actin cytoskeleton in a malignant breast cell.
\end{abstract}

\section{Introduction}
Reaction-diffusion systems have been widely used as fundamental models for the spatio-temporal dynamics of biochemical concentrations in complex protein networks \cite{Murray2003}.
Either data from new experiments or data from the literature can be used to directly determine the parameters of these reaction-diffusion systems, such as the mass diffusivities, rate constants, and initial values. However, the number and types of parameters that can be obtained via these sources are limited.
Although these parameters have physical meanings, 
the estimates of the model parameters solely based on physical laws often give ranges at best.
The lower and upper bounds of these ranges can vary by many orders of magnitude. 
Furthermore, the system may not explain the experimental data, even when all of the parameters are within their respective ranges.
Thus, a method that finds the set of parameters and initial values within a physically reasonable range that best matches the reaction-diffusion system with the experimental data is of considerable interest.
To computationally identify the parameters and initial values, we pose an optimization problem whose objective is to minimize the difference between the solution of the reaction-diffusion system and the data.

Several optimization-based parameter identification methods for reaction-diffusion partial differential equations (PDEs) have been developed in the more general context of parabolic equations. $(i)$ \emph{Semi-discrete} methods pose an approximate optimization problem by approximating a parabolic equation with a system of ordinary differential equations (ODEs)\cite{Banks1988, Ackleh1996}. 
However, the appropriate spatial discretization scheme for which the solution of the adjoint system (the dual of the ODE system)  
 converges to that of the adjoint equation of the parabolic equation is difficult to select \cite{Banks1992}.
$(ii)$ \emph{Discretize-then-optimize} methods fully discretize a weak form of the problem in time and space and then optimize the discretized problem \cite{Meidner2007}.
$(iii)$ \emph{Optimize-then-discretize} methods first obtain an analytic form of the gradient of the cost function with respect to the parameters by utilizing weak formulations of the state and adjoint equations and then discretize the problem to numerically solve the optimization problem \cite{Kravaris1985}. However,
when the weak solution of the reaction diffusion system blows up in finite time and so does that of the adjoint system, neither $(ii)$ nor $(iii)$ is able to compute the gradient of the cost function with respect to the initial values. In this case, neither framework is able to identify the initial values. 
The existing parameter identification methods may fail for some protein network topology, 
since the blow up property of reaction-diffusion systems is related to the network connectivity \cite{Pierre2000}. 
Because protein network structures of interest are diverse and complicated, an identification approach with guaranteed success for any network topology is highly desired.

In this article, we propose a novel reaction kinetics model such that the reaction-diffusion system with this model has a global classical solution regardless of the protein network topology. The reaction kinetics model has two key advantages. 
First, a reaction-diffusion system that implements this reaction kinetics model is an adequate modeling framework for general protein-protein interactions because the solution is nonnegative invariant and does not blow up in finite time.
Second, regardless of the protein network topologicogy, we have well-defined and bounded gradients of the cost function
with respect to the mass diffusivities, rate constants, and initial values
if we employ the reaction kinetics model.
With an analytic formula for the gradients based on an adjoint system,
we are able to efficiently solve the identification problem by simultaneously optimizing all unknown parameters and initial values of the system.
The boundedness of the gradients enhances the robustness of the optimization algorithms by preventing potential failure of the adjoint-based optimal control method: 
if the gradients tend to infinity, the algorithms might be terminated before finding an optimum. Thus, for any network topology, the reaction kinetics model that we propose guarantees the \emph{well-posedness} of the adjoint-based optimal control technique for the identification of reaction-diffusion systems.  

\section{Reaction-Diffusion Systems in Protein Networks}

Assume that the domain $\Omega$ is an open, bounded and connected subset of $\mathbb{R}^\eta$ with the boundary $\partial \Omega$ and outer normal vector $\nu$. We consider the following reaction-diffusion system to model the spatio-temporal dynamics of the biochemical concentrations (or densities) in a protein network: for $i=1, \cdots, N$,
\begin{subequations} \label{main_eq}
\begin{align}
& \frac{\partial u_i}{\partial t} - d_i \Delta u_i = r_i(u, k) \quad \mbox{in } \Omega \times (0,T) \label{pde} \\
&\frac{\partial u_i}{\partial \nu} = 0 \quad \mbox{on }\partial \Omega \times (0,T) \label{nbc}\\
&u_i(x,0)=u_i^0(x) \quad \mbox{in } \Omega \times \{t=0\}, \label{init}
\end{align}
\end{subequations}
where $u := u(x,t) = (u_1(x,t),\cdots,u_N(x,t))$ are the  concentration levels of $N$ proteins, $d=(d_1, \cdots, d_N) \in (0,+\infty)^N$ are the mass diffusivities,
and $k=(k_1,\cdots,k_M) \in (0,+\infty)^M$ are the rate constants.
Note that \eqref{nbc} and \eqref{init} specify the Neumann boundary conditions and initial conditions, respectively.
Assume that the initial value $u^0$ is in $L^{\infty}(\Omega)^N$ and $u^0(x) >0$ for all $x \in \Omega$. 
We call $r_i$ the \emph{reaction function} of the $i$th protein. The structure of the reaction function is determined by two factors: the \emph{reaction kinetics model} and  the \emph{protein network topology}. 
The structure of $r$ has drawn great interest because it affects the blow up property of \eqref{main_eq} \cite{Pierre2000}. 
Therefore, we need to answer the following question:
 {\em `is there  a general reaction kinetics model that guarantees that the reaction-diffusion system does not blow up for any arbitrary network topology?'}.
As an initial step to answering this question, we suggest the following assumptions with respect to the reaction kinetics among proteins $1, \cdots, N$: 
\begin{enumerate}[(A)]
\item \label{a}
No more than two protein molecules can bind to each other at one time;
\item \label{b}
Two protein molecules at most are generated by the dissociation of a complex;
\item \label{c}
Binding and dissociation cannot occur at the same time. 
\end{enumerate}
The reaction kinetics model that we propose is a \emph{mass-action kinetics model that satisfies assumptions (\ref{a}), (\ref{b}) and (\ref{c}).}
For example, consider the protein network depicted in Figure \ref{fig:three}: Protein $\bold{A}$ phosphorylates protein $\bold{B}$, $\bold{B}$ phosphorylates protein $\bold{C}$, and $\bold{C}$ dephosphorylates $\bold{A}$.
The chemical kinetics of the (de)phosphorylations can be modeled as
\begin{equation} \label{kinetic}
\begin{split}
&p\bold{A}+\bold{B} \overset{k_1}{\underset{k_2} {\rightleftharpoons}} p \bold{AB}, \quad 
p\bold{AB} \overset{k_3}{\underset{k_4} {\rightleftharpoons}} p\bold{A} + p\bold{B}, \\
&p\bold{B}+\bold{C} \overset{k_5}{\underset{k_6} {\rightleftharpoons}} p \bold{BC}, \quad p\bold{BC} \overset{k_7}{\underset{k_8} {\rightleftharpoons}} p\bold{B} + p\bold{C}, \\
&p\bold{C}+p\bold{A} \overset{k_9}{\underset{k_{10}} {\rightleftharpoons}} p \bold{CA}, \quad 
p\bold{CA} \overset{k_{11}}{\underset{k_{12}} {\rightleftharpoons}} p\bold{C} + \bold{A},
\end{split}
\end{equation}
where $p\bold{M}$ denotes the phosphorylated $\bold{M}$.
\begin{figure}[tp]
\begin{center}
\includegraphics[width = 2.5in]{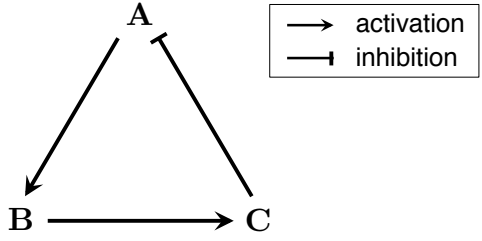}
\caption{A simple protein network.
\label{fig:three}}
\end{center}
\end{figure}
If we let $u_1, u_2, u_3, u_4, u_5, u_6, u_7, u_8$, and $u_9$ denote the concentration levels of $p\bold{A}, \bold{A}, p\bold{B}, \bold{B}, p\bold{C}, \bold{C}, p\bold{AB}, p\bold{BC}$, and $p\bold{CA}$, respectively, then the reaction functions that describe \eqref{kinetic} with mass-action kinetics are given by
\begin{equation}  \label{ex_reaction}
\begin{split}
&r_1= -k_1 u_1 u_4 + k_2 u_7 + k_3 u_7 - k_4 u_1 u_3 -k_9 u_1 u_5 + k_{10} u_9 \\
&r_2 = k_{11} u_9 - k_{12} u_2 u_5 \\
&r_3 = k_3 u_7 - k_4 u_1 u_3 - k_5 u_3 u_6 + k_6 u_8 + k_7 u_8 - k_8 u_3 u_5 \\
&r_4 = -k_1 u_1 u_4 + k_2 u_7 \\
&r_5 = k_7 u_8 - k_8 u_3 u_5 -k_9 u_1 u_5 + k_{10} u_9 + k_{11} u_9 - k_{12} u_2 u_5 \\
&r_6 = -k_5 u_3 u_6 + k_6 u_8 \\
&r_7 = k_1 u_1 u_4 - k_2 u_7 - k_3 u_7 + k_4 u_1 u_3\\
&r_8 = k_5 x_3 x_6 - k_6 x_8 - k_7 x_8 + k_8 x_3 x_5\\
&r_9 = k_9 x_1 x_5 - k_{10} x_9 - k_{11} x_9 + k_{12} x_2 x_5
\end{split}
\end{equation}
Note that the chemical equations \eqref{kinetic} satisfy assumptions (\ref{a}), (\ref{b}) and (\ref{c}).
These assumptions are not restrictive: they only require that the reaction-diffusion system describe the dynamics of chemical signals in detail to some degree, for example, these assumptions do not allow simplified dynamics such as the composition of more than two protein molecules (due to (\ref{a})) or the dissociation into multiple protein molecules (due to (\ref{b})).
Importantly, these assumptions are independent of the protein network structure; therefore, they do not rule out any network topologies.
These assumptions play an important role in proving our key result, the global existence of the classical solution of \eqref{main_eq} with the proposed reaction kinetics model. 
Before we present the key result, we categorize the proteins as follows: 
\begin{itemize}
\item $\bold{Cat}_1$ := \{a single protein species\}.
\item $\bold{Cat}_\alpha$ := \{a complex of $\alpha$ species\}, $\alpha=2,3,\cdots$.
\end{itemize}
By definition, any chemical kinetics can generate protein molecules only  within these categories. 
We assume that $i \leq j$ whenever protein $i$ is in $\bold{Cat}_\alpha$ and protein $j$ is in $\bold{Cat}_\beta$ with $\alpha \leq \beta$, by permuting $\{ \mbox{protein $i$}\}_{i=1}^N$ if necessary.

\subsection{The global existence of the classical solution}
We now introduce a definition of the classical solution of \eqref{main_eq} as in \cite{Pierre2010}: 
\begin{definition}
$u$ is said to be the \emph{classical solution} of \eqref{main_eq} provided that
\begin{itemize}
\item $u$ solves \eqref{main_eq} almost everywhere.
\item $u \in C([0,T); L^1(\Omega)^N) \cap L^{\infty}(\Omega \times [0,T-\tau])^N$, for all $\tau \in (0,T)$.
\item $u_t, u_{x_i}, u_{x_i x_j} \in L^p (\Omega \times (\epsilon_1, T-\epsilon_2))^N$ for all $p\in [1, \infty)$, for all $i, j = 1,\cdots,m$ and for all $\epsilon_1, \epsilon_2 \in (0,T)$.
\end{itemize}
If $T$ can be chosen as $+\infty$, we say that the reaction-diffusion system \eqref{main_eq} has a \emph{global classical solution}. 
\end{definition}
\noindent Note that $\tau$ and $\epsilon_2$ can be selected as $0$ when a global classical solution exists.

To prove that the reaction-diffusion system with the proposed reaction kinetics model has a global classical solution regardless of the network topology, we first observe the following features of $r$.

\begin{lemma}\label{lem}
Suppose that the reaction kinetics model is a mass-action kinetics model satisfying the assumptions (\ref{a}), (\ref{b}) and (\ref{c}). Then the followings hold:
\begin{enumerate}[(I)]
\item \label{A}
The reaction functions, $r_i$, $i = 1,\cdots, N$, are quadratic at most;
\item \label{B}
The quadratic terms of the reaction functions of the proteins in $\bold{Cat}_1$ are non-positive for all $u \in [0, +\infty )^N$;
\item \label{C}
For all $u \in [0, +\infty )^N$, any quadratic term of the reaction function of the proteins in $\bold{Cat}_\alpha$, $\alpha =2,3,\cdots$, can be made to be non-positive by adding a linear combination, with non-negative coefficients, of the reaction functions of the proteins in $\bold{Cat}_\beta$ for $\beta = 1,\cdots, \alpha-1$.
\end{enumerate}
\end{lemma}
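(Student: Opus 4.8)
The plan is to prove the three claims directly from the structure that mass-action kinetics imposes on the reaction functions once assumptions (\ref{a}), (\ref{b}) and (\ref{c}) are in force. The first step is to pin down the form of an elementary reaction: by (\ref{a}) an association step has at most two reactant molecules, and by (\ref{c}) it is purely associative, so it reads $P + Q \rightharpoonup PQ$ with mass-action rate $k\,u_P u_Q$ (or $2P \rightharpoonup P_2$ with rate $k\,u_P^2$ when the two molecules coincide); by (\ref{b}) a dissociation step produces at most two molecules and is unimolecular, $C \rightharpoonup P + Q$, with rate $k\,u_C$. Under mass action, $r_i$ is the signed sum over all elementary reactions of (net stoichiometric coefficient of species $i$) times (rate), so each contributing monomial has degree $2$ (association) or $1$ (dissociation). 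Summing finitely many such monomials yields claim (\ref{A}): every $r_i$ is a polynomial of degree at most two.

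For claim (\ref{B}), I would observe that a quadratic monomial in any $r_i$ can only come from an association step, since dissociation rates are linear. If $i \in \mathbf{Cat}_1$ is a single species, it cannot be the \emph{product} of an association step (association yields a complex, which lies in $\mathbf{Cat}_\alpha$ with $\alpha \geq 2$), so in every association step in which it appears it is a \emph{reactant} and its net stoichiometric coefficient is negative. Hence each quadratic term of $r_i$ is of the form $-k\,u_i u_X$ (or $-2k\,u_i^2$ for a self-association), which is non-positive on $[0,+\infty)^N$.

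Claim (\ref{C}) is where the category bookkeeping does the work, and it is the step I expect to be the main obstacle. A \emph{positive} quadratic term in the reaction function of a complex $C \in \mathbf{Cat}_\alpha$ can arise only from an association step $P + Q \rightharpoonup C$ that produces $C$, contributing $+k\,u_P u_Q$. Because $C$ is built from exactly two bound pieces, the constituent counts add, $\alpha = \alpha_P + \alpha_Q$ with $\alpha_P, \alpha_Q \geq 1$, so both reactants satisfy $\alpha_P, \alpha_Q \leq \alpha - 1$, i.e. $P, Q \in \mathbf{Cat}_\beta$ with $\beta < \alpha$. The same elementary step contributes $-k\,u_P u_Q$ to $r_P$, and --- this is the point that needs care --- the monomial $u_P u_Q$ appears with a positive sign \emph{only} in $r_C$: any reaction with rate proportional to $u_P u_Q$ is, by (\ref{a}) and (\ref{c}), exactly the association of $P$ and $Q$, which consumes both and produces the single complex $C$. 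Therefore adding $r_P$ (with coefficient $1$, or $\tfrac12 r_P$ in the self-association case $P = Q$, both non-negative) to $r_C$ cancels the offending term, making the corresponding quadratic coefficient vanish; doing this for each positive quadratic term of $r_C$ simultaneously amounts to adding a single non-negative linear combination of reaction functions of proteins in $\mathbf{Cat}_\beta$, $\beta < \alpha$. The delicate points to verify carefully are that the decomposition $\alpha = \alpha_P + \alpha_Q$ indeed forces both reactants into strictly lower categories (so the combination only uses lower-category functions, consistent with the ordering convention $i \le j$ across categories), the exact sign-and-coefficient matching between the production term in $r_C$ and the consumption term in $r_P$, and the edge case of self-association where the stoichiometric coefficient of the reactant is $-2$.
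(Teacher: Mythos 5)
Your arguments for (\ref{A}) and (\ref{B}) are correct and are essentially the paper's: the paper disposes of (\ref{A}) as immediate from the assumptions, and proves (\ref{B}) by noting that a $\bold{Cat}_1$ species can only be \emph{produced} by a dissociation or a conversion, so all positive contributions are at most linear --- the contrapositive of your observation that every quadratic term in which a single species appears must be a consumption term.

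For (\ref{C}), however, your argument stops one step short, and the missing step is exactly the one the paper covers with the phrase ``an inductive argument.'' You correctly identify that a positive quadratic term $+k\,u_P u_Q$ in $r_C$, $C\in\bold{Cat}_\alpha$, arises from an association with $P\in\bold{Cat}_{\alpha_P}$, $Q\in\bold{Cat}_{\alpha_Q}$, $\alpha_P+\alpha_Q=\alpha$, and that adding $r_P$ (or $\lambda_1 r_P+\lambda_2 r_Q$ with $\lambda_1+\lambda_2\ge 1$, as the paper does) cancels that term. But if $\alpha_P\ge 2$, then $r_P$ is itself the reaction function of a complex and generically carries its \emph{own} positive quadratic terms (from the associations that form $P$); adding $r_P$ to $r_C$ therefore injects new positive quadratic monomials into the combination. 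Your conclusion that ``doing this for each positive quadratic term of $r_C$ simultaneously amounts to adding a single non-negative linear combination'' with non-positive quadratic part is false as stated after only one round of cancellation. The repair is a strong induction on $\alpha$: by the induction hypothesis each such $r_P$, $P\in\bold{Cat}_{\alpha_P}$ with $2\le\alpha_P\le\alpha-1$, can itself be corrected by a non-negative combination of reaction functions from categories $<\alpha_P$, and since the category index strictly decreases the recursion terminates at $\bold{Cat}_1$, where (\ref{B}) applies. This matters because the lemma is used in the theorem to build the lower-triangular matrix $L$ with $Lr\le A(1+\sum_i u_i)$, which requires the \emph{entire} quadratic part of each row of $Lr$ to be non-positive, not merely the original quadratic terms of $r_C$. (A very minor additional point: the monomial $u_Pu_Q$ need not appear positively \emph{only} in $r_C$ --- there could be two associations of $P$ and $Q$ forming distinct complexes --- but this only makes the cancelled coefficient more negative, so it is harmless.)
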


\begin{proof}
Observation (\ref{A}) is immediate from assumptions (\ref{a}), (\ref{b}) and (\ref{c}). 

To show that (\ref{B}), we observe by assumption (\ref{c}) that 
 a protein in $\bold{Cat}_1$ is generated by either the decomposition of a complex in $\bold{Cat}_\alpha$, $\alpha =2,3,\cdots$ or a conversion of another protein in $\bold{Cat}_1$. Therefore, all of the positive terms of the reaction function are linear at most in $u$. (See, for example, $r_1, r_2, r_3, r_4, r_5$, and $r_6$ in \eqref{ex_reaction}.) 
 
To observe (\ref{C}), we first note that if the composition of two proteins, say $l$ and $m$, in $\bold{Cat}_\beta$ and $\bold{Cat}_\gamma$, respectively, generates a complex, say $n$, in $\bold{Cat}_\alpha$, then $\alpha = \beta+\gamma$ due to (\ref{c}), which implies that $1 \leq \beta, \gamma \leq \alpha-1$. 
In addition, the positive quadratic term in $r_n$ that represents the rate of this composition (e.g. $k_1 u_1 u_4$ in $r_7$) is negative in the quadratic terms in $r_l$ and $r_m$ that represent the depletion rates of the two proteins (e.g. $-k_1 u_1 u_4$ in $r_1$ and $r_4$). Therefore, the positive quadratic term in $r_n$ can be removed by adding a linear combination $\lambda_1 r_l + \lambda_2 r_m$ with $\lambda_1 + \lambda_2 \geq 1$, $\lambda_1, \lambda_2 \geq 0$ (e.g., $r_7 + (0.5 r_1 + 0.5 r_4)$.)  Therefore, an inductive argument enables us to show observation (\ref{C}). 
\end{proof}

Using Lemma \ref{lem}, we show that the proposed reaction kinetic model allows a reaction-diffusion system for any network to have a global classical solution.

\begin{theorem}
Suppose that the reaction kinetics model is a mass-action kinetics model satisfying the assumptions (\ref{a}), (\ref{b}) and (\ref{c}). Then, the reaction-diffusion system has a global classical solution regardless of the protein network topology.
\end{theorem}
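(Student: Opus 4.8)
The plan is to follow the standard blow-up alternative for semilinear parabolic systems: establish a local classical solution, show it is nonnegative, derive a priori $L^\infty$ bounds on every finite time interval using the structural information of Lemma~\ref{lem}, and conclude that the maximal existence time must be $+\infty$.

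First I would invoke local well-posedness. By observation (\ref{A}) of Lemma~\ref{lem}, each $r_i$ is a polynomial of degree at most two, hence smooth and locally Lipschitz on $[0,+\infty)^N$; since $d_i>0$ for every $i$, system \eqref{main_eq} is a genuinely parabolic semilinear system with homogeneous Neumann data, so standard analytic-semigroup and fixed-point theory yields a unique classical solution on a maximal interval $[0,T_{\max})$, together with the blow-up alternative: if $T_{\max}<+\infty$ then $\limsup_{t\to T_{\max}}\|u(\cdot,t)\|_{L^\infty(\Omega)^N}=+\infty$. Next I would establish nonnegativity. In a mass-action model every loss (negative) term of $r_i$ represents a reaction that consumes species $i$ as a reactant and therefore carries $u_i$ as a factor; consequently $r_i(u)\ge 0$ whenever $u_i=0$ and $u\ge 0$ (quasi-positivity). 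With the positive initial data $u^0>0$, the invariant-region / maximum-principle argument then gives $u(\cdot,t)\in[0,+\infty)^N$ on all of $[0,T_{\max})$, which is also what makes Lemma~\ref{lem}(\ref{B})--(\ref{C}) applicable, since those statements hold for $u\in[0,+\infty)^N$.

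The core of the argument is the a priori bound, and here Lemma~\ref{lem} does the work. Ordering the components by category and using (\ref{B}) for the base case and (\ref{C}) inductively, I would build, category by category, a lower-triangular change of variables with nonnegative coefficients so that every resulting weighted partial sum of reaction functions has non-positive quadratic part, hence is bounded above by an affine function of $u$. Integrating the corresponding linear combination of the equations in \eqref{pde} over $\Omega$ and using \eqref{nbc} to kill the Laplacian terms, so that $\int_\Omega \Delta u_i\,dx=\int_{\partial\Omega}\partial u_i/\partial\nu\,dS=0$, I obtain differential inequalities of the form $\frac{d}{dt}\int_\Omega(\text{weighted partial sum})\,dx\le C\bigl(1+\int_\Omega\sum_j u_j\,dx\bigr)$; Gronwall's inequality, applied inductively over the categories, yields an $L^\infty(0,T;L^1(\Omega))$ bound on each $u_i$ for every finite $T$. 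This is precisely the \emph{intermediate-sum} (mass-control) condition with linear growth, the best possible case, and it holds for any topology because the construction in Lemma~\ref{lem} is topology-independent.

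The hard part will be upgrading the $L^1$ mass control to the $L^\infty$ bound needed to rule out blow-up, since quadratic systems with mere $L^1$ control need not stay bounded in general. Here I would exploit the same triangular structure a second time: proceeding category by category, each equation's source (positive) quadratic terms are, after the triangular combination, dominated by linear terms in the already-bounded lower-category components while its remaining quadratic terms are absorbing, so a duality / $L^p$-bootstrap argument in the spirit of \cite{Pierre2010} raises the integrability of each component from $L^1$ up to $L^\infty$ on $\Omega\times(0,T)$. With a uniform $L^\infty$ bound on every finite interval, the blow-up alternative forces $T_{\max}=+\infty$, and parabolic regularity promotes the global bounded solution to a global classical solution in the sense of the Definition. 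Since every step is insensitive to the network structure, the conclusion holds for any protein network topology.
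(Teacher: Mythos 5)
Your proposal is correct and follows essentially the same route as the paper: both arguments reduce the theorem to verifying quasi-positivity \eqref{qp}, the lower-triangular control of the quadratic terms coming from Lemma~\ref{lem} (the paper's condition \eqref{diag_bd}), and the linear mass-control bound \eqref{sum_bd}, and then invoke the global classical existence result of \cite{Pierre2010} for systems with this structure. The only difference is one of packaging — you unpack the blow-up alternative, the $L^1$ Gronwall estimate, and the duality/$L^p$ bootstrap that the paper leaves entirely inside the citation to \cite{Pierre2010}.
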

\begin{proof}
Based on the three observations (\ref{A}), (\ref{B}) and (\ref{C}), we have an $N\times N$ lower triangular matrix, $L$, with nonnegative elements and positive diagonal elements such that
\begin{equation} \label{diag_bd}
\begin{split}
Lr(u, k) \leq A \left ( 1 + \sum_{i=1}^N u_i \right ) 
\end{split}
\end{equation}
for all $u \in [0, +\infty )^N$, and for some $N$ dimensional vector $A$. The inequality \eqref{diag_bd} holds element-wise  between the two $N$ dimensional vectors. For example, in \eqref{ex_reaction},
\begin{equation} \nonumber
 \def\dsp{\def\baselinestretch{1}\large\normalsize}
 \dsp
L = 
\begin{bmatrix} 
1 & 0 & 0 & 0 & 0 & 0 & 0 & 0 & 0  \\
0 & 1 & 0 & 0 & 0 & 0 & 0 & 0 & 0 \\
0 & 0 & 1 & 0 & 0 & 0 & 0 & 0 & 0 \\
0 & 0 & 0 & 1 & 0 & 0 & 0 & 0 & 0 \\
0 & 0 & 0 & 0 & 1 & 0 & 0 & 0 & 0 \\
0 & 0 & 0 & 0 & 0 & 1 & 0 & 0 & 0 \\
0.5 & 0 & 0.5 & 0.5 & 0 & 0 & 1 & 0 & 0\\
0 & 0 & 0.5 & 0 & 0.5 & 0.5 & 0 & 1 & 0\\
0.5 & 0.5 & 0 & 0 & 0.5 & 0 & 0 & 0 & 1
\end{bmatrix}.
\end{equation}

Next, we show the quasi-positive structure of the reaction functions, meaning that, for each $i=1,\cdots,N$,
\begin{equation} \label{qp}
r_i(u_1,\cdots, u_{i-1},0,u_{i+1},\cdots,u_N,k) \geq 0
\end{equation}
for all $u \in [0,+\infty)^N$. Indeed, the negative contributions to the reaction function of the $i$th protein are only from $(i)$ the composition of itself with others, $(ii)$ the decomposition of itself, and $(iii)$ the conversion to another species. Therefore, if we set $u_i$ to zero, all negative terms are eliminated from the reaction function, which implies the quasi-positivity of $r$. This quasi-positivity also guarantees that $[0, +\infty )^N$ is an invariant set of the classical solution \cite{Lightbourne1977}.

Furthermore we note that, for all $u \in [0, +\infty)^N$,
\begin{equation} \label{sum_bd}
\begin{split}
\sum_{i=1}^N r_i(u,k) \leq a \left ( 1 + \sum_{i=1}^N u_i \right ),
\end{split}
\end{equation}
for some constant $a$. This observation can be proven using a similar argument to the one used to prove observation (\ref{C}).

In \cite{Pierre2010}, it is shown that a reaction-diffusion system that satisfies \eqref{diag_bd}, \eqref{qp} and \eqref{sum_bd} has a unique global classical solution. Therefore, \eqref{main_eq} with the reaction kinetics model that is characterized by (\ref{a}), (\ref{b}) and (\ref{c}) has a unique global classical solution  for an arbitrary network topology because it satisfies \eqref{diag_bd}, \eqref{qp} and \eqref{sum_bd} regardless of the protein network structure.
\end{proof}

Inequalities \eqref{qp} and \eqref{sum_bd} regardless of the protein network structure. \eqref{diag_bd} and \eqref{sum_bd} provide uniform control over the growth of the nonlinear reaction function $r$ to prevent the solution from blowing up in finite time. The quasi-positivity \eqref{qp} allows us to consider only the case in which the solution is nonnegative across time
because this assumption enforces the nonnegative invariance of the solution. 
 The nonnegativity and (essential) boundedness of the classical solution are also useful for modeling biochemical processes in which the solution $u$ represents quantities that must be nonnegative and cannot blow up in finite time, such as chemical concentrations and densities. 

The global existence of the classical solution allows us to consider \eqref{main_eq} up to time $T$ for any arbitrary $T>0$, i.e., we are able to set the time interval as $[0,T]$.
Recall that the global classical solution has the following regularity:
$u \in C([0,T]; L^1(\Omega)^N) \cap L^{\infty}(\Omega \times [0,T])^N$ and $u_t, u_{x_i}, u_{x_i x_j} \in L^p (\Omega \times (\epsilon, T))^N$ for all $p\in [1, \infty)$, for each $i, j = 1,\cdots,m$, and for all $\epsilon \in (0,T)$.
 

With \eqref{qp} and \eqref{sum_bd} only we can at most claim the global existence of a weak solution. In other words, without  \eqref{diag_bd}, the solution may blow up in $L^{\infty}$ in finite time \cite{Pierre2000}. Recall that \eqref{diag_bd} holds due to the assumptions (\ref{a}), (\ref{b}) and (\ref{c}), which require a detailed modeling of chemical signal dynamics.
The detailed reaction kinetics model not only provides biological plausibility but also guarantees the existence of a global classical solution of the reaction-diffusion system regardless of the protein network topology.

\section{Identification method}

Let $F$ be an $N^* \times N$ observation matrix with $N^* \leq N$ such that the states $Fu$ can be measured.  
Suppose that the data $c: \Omega \times [0,T] \rightarrow \mathbb{R}^{N^*}$, which is an experimental measurement of $F u$, are given.
If $u_{i_m}$, for some $m=1,\cdots,q$, are not measured, 
then the initial value of these $u_{i_m}$'s are unknown. Let $I_{m}$, $m=1,\cdots,q$, be the unknown initial value of each $u_{i_m}$, and let $I=(I_{1}, \cdots, I_{q})$. Therefore, $I \in L^{\infty}(\Omega)^q$ represents a vector of unknown initial values of the reaction-diffusion system \eqref{main_eq}. By definition, $N^* + q = N$.
Let $G$ be an affine function such that $G(I) = u^0$. For instance, assume that $F u = u_3$ with $N = 3$ and $N^* = 1$. Then, $I = (I_1, I_2) := (u_1^0, u_2^0)$ and 
\begin{equation} \nonumber
\small
G(I) = 
\begin{bmatrix}
1 & 0  \\ 0 & 1 \\ 0 & 0
\end{bmatrix}
I
+
\begin{bmatrix}
0 \\
0 \\
u_3^0
\end{bmatrix}
=
\begin{bmatrix}
u_1^0 \\
u_2^0 \\
u_3^0
\end{bmatrix}
= u^0.
\end{equation}
Our goal is to estimate the parameters and unknown initial values in \eqref{main_eq} such that the solution of \eqref{main_eq} best matches the data $c$.
The space of the parameters and initial values can be defined by $\Theta :=  \{ (d,k, I) \in \mathbb{R}^N \times \mathbb{R}^M \times L^\infty (\Omega)^q  \: | \: d>0, k>0, I(x)>0 \mbox{ for all }x\in \Omega  \}$. 
 To find an optimal set of unknown  parameters and initial values $\theta = (d, k, I) \in \Theta$ that minimizes the $L^2$-norm of the difference between $Fu$ and $c$, we pose the following optimization problem in the function space $\Theta$ with a PDE constraint. 
\begin{subequations} \label{main_opt}
\begin{align}
\min_{\theta \in \Theta} \quad &J(u[\theta]):=\frac{1}{2}  \int_{0}^{T} \int_{\Omega} {\left\Vert Fu(x,t)- c(x,t)  \right\Vert}_2^2 \: dxdt \label{obj}\\
\text{subject to} \quad & \mbox{reaction-diffusion system} \; \eqref{main_eq} \label{constr_sys}\\
&d_i^L \leq d_i \leq d_i^U, \quad i=1,\cdots, N \label{constr_d}\\
&k_i^L \leq k_i \leq k_i^U, \quad i=1,\cdots, M  \label{constr_k} \\
&I_i^L \leq I_i (x) \leq I_i^U, \quad i = 1,\cdots, q, \; x \in \Omega \label{constr_I}
\end{align}
\end{subequations}
where $u[\theta]$ is the solution of \eqref{main_eq} with the set $\theta$ of unknown parameters and initial values.
Here $d_i^U$, $k_i^U$ and $I_i^U$ are the positive upper bounds of $d_i$, $k_i$ and $I_i$, respectively. Similarly $d_i^L$, $k_i^L$ and $I_i^L$ are the positive lower bounds of $d_i$, $k_i$ and $I_i$, respectively. To numerically solve \eqref{main_opt} efficiently, e.g., using quasi-Newton methods, we need the gradients of the cost function $J$ in \eqref{obj} with respect to $d$, $k$ and $I$. However, $d$, $k$ and $I$ are not explicit to $J$; these parameters implicitly affect $J$ through the constraint \eqref{constr_sys}. This implicit dependence makes it difficult to compute the gradients. Furthermore, it is not known whether these gradients are well-defined and bounded. We utilize the adjoint-based method to prove the existence, uniqueness, and boundedness of these gradients by deriving their explicit forms.

\subsection{Adjoint-based gradient computations}

We begin by introducing a system of PDEs. This system of PDEs is called the \emph{adjoint system} of \eqref{main_eq} with respect to the optimization problem \eqref{main_opt}. We will use the solution of this system to prove the Fr\'{e}chet differentiability of $J$ and to obtain the gradients. The adjoint system is defined as
\begin{subequations} \label{adjoint}
\begin{align}
& -\frac{\partial \mu_i}{\partial t} - d_i \Delta \mu_i = w_i(\mu,u,k) \quad \text{in} \;  \Omega \times [0,T) \label{adjoint_pde}\\
&\frac{\partial \mu_i}{\partial \nu} = 0 \quad \text{on} \; \partial \Omega \times [0,T) \label{adjoint_Nbc}\\
&\mu_i(x,T)=0 \quad \text{in} \;  \Omega, \label{adjoint_final}
\end{align}
\end{subequations}
where $w(\mu,u,k):= \frac{\partial r(u,k)}{\partial u}^\top \mu - F^\top(F u- c)$ and $A^\top$ denotes the transpose of a matrix $A$. The linearity of $w_i$ in $\mu$ enables us to prove the global existence of the classical solution of the adjoint system.
\begin{proposition}
Suppose that a reaction-diffusion system \eqref{main_eq} has a global classical solution. Then its adjoint system \eqref{adjoint} with respect to the optimization problem \eqref{main_opt} also has a global classical solution.
\end{proposition}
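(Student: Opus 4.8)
The plan is to establish that the adjoint system \eqref{adjoint} is a \emph{linear} parabolic system whose coefficients and forcing term inherit enough regularity from the already-established global classical solution $u$ of \eqref{main_eq}, and then to invoke standard linear parabolic theory. The key structural observation is that the nonlinearity in \eqref{adjoint_pde} is only through the term $\frac{\partial r}{\partial u}^\top \mu$, which is \emph{linear} in the unknown $\mu$; the matrix $\frac{\partial r(u,k)}{\partial u}$ and the inhomogeneous term $-F^\top(Fu-c)$ depend only on the known data $u$ and $c$, not on $\mu$. This is exactly the point emphasized just before the statement, and it is what rules out the finite-time blow-up that plagues the forward system.

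First I would perform the time reversal $s = T - t$, so that $\mu(x,s) := \mu(x,T-s)$ satisfies a \emph{forward} parabolic system $\partial_s \mu_i - d_i \Delta \mu_i = w_i$ with the initial condition $\mu_i(x,0)=0$ coming from \eqref{adjoint_final} and the homogeneous Neumann condition \eqref{adjoint_Nbc} preserved. This converts the backward-in-time problem into a standard initial-boundary-value problem for a linear reaction-diffusion system. Second I would record the regularity of the coefficients: since $r$ is at most quadratic in $u$ by observation \eqref{A} of Lemma \ref{lem}, each entry of $\frac{\partial r}{\partial u}$ is affine in $u$, and because the forward solution satisfies $u \in L^\infty(\Omega \times [0,T])^N$, the matrix $\frac{\partial r}{\partial u}(u,k)$ has entries in $L^\infty(\Omega\times[0,T])$. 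Likewise $F^\top(Fu-c)$ lies in $L^\infty$ (assuming, as is standard, that the data $c$ is bounded). Thus all coefficients of the linear system are bounded and measurable.

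Third, with bounded coefficients, I would derive an a priori $L^\infty$ bound on $\mu$ that is \emph{global in time}, which is where linearity does the essential work. Writing $b(x,s) := \|\frac{\partial r}{\partial u}(u(x,T-s),k)\|$ for the operator-norm bound on the coefficient matrix and $g := F^\top(Fu-c) \in L^\infty$, a Gronwall-type estimate on the scalar comparison function gives $\|\mu(\cdot,s)\|_{\infty} \le \left( \int_0^s \|g(\cdot,\sigma)\|_\infty\, d\sigma \right) e^{\int_0^s b(\cdot,\sigma)\,d\sigma}$, which stays finite for every $s\in[0,T]$ precisely because the growth is at most exponential (linear in $\mu$) rather than superlinear. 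This is the step that guarantees there is no blow-up: unlike the forward system, no structural conditions analogous to \eqref{diag_bd} or \eqref{sum_bd} are needed, since a linear system with bounded coefficients cannot blow up in finite time. Having the $L^\infty$ bound in hand, I would then bootstrap via interior $L^p$ parabolic estimates (maximal regularity for the heat operator with Neumann data) to upgrade $\mu$ to the full regularity required by the Definition, namely $\mu \in C([0,T];L^1(\Omega)^N)\cap L^\infty(\Omega\times[0,T])^N$ with $\mu_t, \mu_{x_i}, \mu_{x_ix_j}\in L^p(\Omega\times(\epsilon,T))^N$, and finally undo the time reversal to transfer these conclusions back to the original $\mu$.

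The main obstacle I anticipate is regularity of the coefficient matrix rather than existence itself: because the full regularity of the forward solution $u$ only holds on $\Omega\times(\epsilon,T)$ away from $t=0$ (the initial data is merely $L^\infty$), the coefficients $\frac{\partial r}{\partial u}(u,k)$ are guaranteed smooth only on the interior in time, so the bootstrap to classical regularity for $\mu$ must likewise be carried out on interior time slices and then patched, mirroring exactly the interior-in-time structure already built into the Definition of classical solution. Establishing that the $L^p$ forcing term is good enough to close the parabolic regularity estimates — and in particular that the at-most-affine dependence of $\partial r/\partial u$ on the bounded $u$ keeps the coefficients in the right function class uniformly — is the technical heart, but it is routine given the quadratic bound from Lemma \ref{lem}\eqref{A} and the established boundedness of $u$.
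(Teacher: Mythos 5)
Your proof is correct in its essentials and rests on exactly the same two structural facts the paper exploits---the affinity of $w$ in $\mu$ and the boundedness $u \in L^\infty(\Omega\times[0,T])^N$ of the forward solution---but it takes a genuinely different route to global existence. The paper, after the same time reversal to \eqref{adjointS}, invokes local existence and then rules out finite-time blow-up by constructing the Lyapunov function $V(\zeta)=\sum_{i}\zeta_i^2$ and verifying the six conditions of Morgan's framework with $p_1=p_2=1$, so that Theorem \ref{thm1} and Theorem \ref{thm2} yield $T_{\max}=\infty$; the key computation there is an $L^2$-type energy bound $\nabla V\cdot z \le C_5 V + C_6$ obtained from the affine representation $z_i = a_i\cdot\zeta + b_i$ with $a_i, b_i \in L^\infty$. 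You instead aim directly for a global-in-time $L^\infty$ a priori bound via Gronwall and then bootstrap with linear parabolic regularity; this is more elementary and self-contained, at the cost of having to carry out the continuation and bootstrap yourself rather than outsourcing them to cited theorems. One caveat on your $L^\infty$ step: for a coupled system, a pointwise ``scalar comparison function'' argument requires the coupling matrix to be cooperative (nonnegative off-diagonal entries), which $\frac{\partial r}{\partial u}^\top$ need not be; the clean way to obtain your stated bound is the variation-of-constants formula with the Neumann heat semigroups $e^{d_i s\Delta}$ (which are $L^\infty$-contractions), followed by Gronwall applied to $\max_i\|\mu_i(\cdot,s)\|_{L^\infty(\Omega)}$. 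With that repair, and your correct observation that the coefficients are only guaranteed regular on interior time slices so the bootstrap must be performed on $(\epsilon,T)$ and patched, the argument closes and delivers the same conclusion as the paper's Appendix \ref{app1}.
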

A detailed proof is contained in Appendix \ref{app1}.
Before studying the differentiability of the cost function, we record the following regularity of a reaction-diffusion system \eqref{main_eq} and its adjoint system \eqref{adjoint}.
\begin{lemma} \label{lem2}
Let $u$ and $\mu$ be the global classical solutions of \eqref{main_eq} and \eqref{adjoint}, respectively. Then,
\begin{equation} \label{L2}
D u_i, D \mu_i  \in L^2(\Omega \times (0,T))^\eta.
\end{equation}
\end{lemma}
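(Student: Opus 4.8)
The plan is to obtain the two $L^2$ gradient bounds from the classical energy (multiplier) estimate, using the Neumann boundary conditions to annihilate boundary terms and the $L^\infty$ regularity of the global classical solutions — together with the at-most-quadratic structure of $r$ from Lemma~\ref{lem}(\ref{A}) — to control the reaction and source terms. The one genuine subtlety is that the classical-solution regularity only supplies $u_{x_i}\in L^p$ on time intervals bounded away from $t=0$ (and, for the adjoint, away from $t=T$), so I would run the estimate on such a subinterval and then pass to the limit.

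For $u$, I would first note that on $\Omega\times(\epsilon,T)$ the solution enjoys $u_t,u_{x_i},u_{x_ix_j}\in L^p$ for all $p$, so testing \eqref{pde} with $u_i$, integrating over $\Omega$, and integrating by parts (the boundary term vanishing by \eqref{nbc}) is fully justified and yields
\begin{equation} \nonumber
\frac{1}{2}\frac{d}{dt}\int_\Omega u_i^2\,dx + d_i\int_\Omega |Du_i|^2\,dx = \int_\Omega u_i\,r_i(u,k)\,dx.
\end{equation}
Integrating in time over $(\epsilon,T)$ and discarding the nonnegative final-time energy, the right-hand side is bounded by $\tfrac12\|u_i\|_{L^\infty}^2|\Omega| + \|u_i\|_{L^\infty}\|r_i\|_{L^\infty}|\Omega|\,T$; this is finite and independent of $\epsilon$ because $u\in L^\infty(\Omega\times[0,T])^N$ and, being at most quadratic, $r_i(u,k)$ is bounded whenever $u$ is. Since $d_i>0$ and $|Du_i|^2\ge 0$, monotone convergence as $\epsilon\to0$ then gives $Du_i\in L^2(\Omega\times(0,T))^\eta$.

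For $\mu$ I would argue identically, the only change being that the preceding Proposition places the regularity loss at $t=T$ rather than $t=0$ (under the time reversal $s=T-t$ the adjoint becomes a forward system with zero initial data). Testing \eqref{adjoint_pde} with $\mu_i$, integrating by parts using \eqref{adjoint_Nbc}, and integrating over $(0,T-\epsilon)$ produces the analogous bound involving $\int_0^{T-\epsilon}\!\int_\Omega \mu_i w_i$. To control this term I would use that $\partial r/\partial u$ is at most linear in $u$, hence bounded since $u\in L^\infty$, so that $(\partial r/\partial u)^\top\mu\in L^\infty$, while $F^\top(Fu-c)\in L^2$ as soon as the data satisfies $c\in L^2(\Omega\times(0,T))^{N^*}$ (which is already needed for $J$ to be finite); Cauchy--Schwarz with $\mu_i\in L^\infty\subset L^2$ then bounds the term uniformly in $\epsilon$, and letting $\epsilon\to0$ yields $D\mu_i\in L^2(\Omega\times(0,T))^\eta$.

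The hard part is precisely the endpoint: one cannot test the equation with the solution all the way up to $t=0$ (resp.\ $t=T$), since there the spatial gradient is not yet known to lie in any $L^2$ space. The resolution — running the energy identity on $(\epsilon,T)$, extracting an $\epsilon$-uniform bound from the $L^\infty$ estimate and the bounded initial (resp.\ zero terminal) data, and closing with monotone convergence — is the step that genuinely exploits the boundedness and quadratic growth furnished by the global-classical-solution theorem and Lemma~\ref{lem}, and it is where I would concentrate the rigor.
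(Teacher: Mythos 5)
Your proof is correct and follows essentially the same route as the paper: multiply \eqref{pde} (resp.\ \eqref{adjoint_pde}) by $u_i$ (resp.\ $\mu_i$), integrate by parts using the Neumann conditions, and bound the reaction/source terms via the $L^\infty$ regularity of the global classical solutions. Your additional care at the endpoints --- running the energy identity on $(\epsilon,T)$ and passing to the limit by monotone convergence, since the classical-solution regularity only gives $Du_i\in L^p$ away from $t=0$ --- is a technical refinement the paper silently omits, but it does not change the substance of the argument.
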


\begin{proof}
We multiply \eqref{pde} by $u$ and integrate over $\Omega \times (0,T)$ to have
\begin{equation} \nonumber
\int_0^T \int_\Omega  \frac{\partial u_i}{\partial t} u_i + d_i \| D u_i \|^2 \: dxdt = \int_0^T \int_\Omega r_i(u,k) u_i  \: dxdt,
\end{equation}
where we used the divergence theorem.
Using the fact that 
\begin{equation} \nonumber
\int_0^T \int_\Omega  \frac{\partial u_i}{\partial t} u_i \: dxdt = \int_0^T \int_\Omega  \frac{1}{2} \frac{\partial u_i^2}{\partial t} \: dxdt = \frac{1}{2} \int_\Omega (u_i(x,T)^2 - u_i(x,0)^2) \: dx,
\end{equation}
we deduce
\begin{equation} \nonumber
\begin{split}
\int_0^T \int_\Omega d_i \| D u_i \|^2 \: dxdt &= \int_0^T \int_\Omega r_i(u,k) u_i  \: dxdt - \frac{1}{2}\int_\Omega (u_i(x,T)^2 - u_i(x,0)^2) \: dx.
\end{split}
\end{equation}
Since $u \in L^\infty (\Omega \times [0,T])^N$, we obtain \eqref{L2}. Similarly, we can prove 
\begin{equation} \nonumber
D \mu_i  \in L^2(\Omega \times (0,T))^\eta
\end{equation}
for the solution of the adjoint system \eqref{adjoint}.
\end{proof}

We state the main result on the Fr\'{e}chet differentiability of the cost function $J$ and the analytic expression of its gradients that is associated with the adjoint state. 
\begin{theorem} \label{prop2}
The cost function $J$ is Fr\'{e}chet differentiable and its gradients with respect to $d$, $k$, and $I$ are given by
\begin{subequations} \label{grad}
\begin{align}
\nabla_{d} J = &-\int_0^{T} \int_{\Omega}  S(\Delta u)^\top  \mu \: dxdt \label{grad_d} \\
\nabla_{k} J=&-\int_0^T \int_{\Omega} \frac{\partial r(u,k)}{\partial k}^\top \mu \: dxdt \label{grad_k} \\
\nabla_{I} J = & - \frac{\partial G(I)}{\partial I}^\top \mu (x,0), \label{grad_I}
\end{align}
\end{subequations}
where $S(\Delta u)$ is an $N \times N$ diagonal matrix with the $i$th entry equal to $\Delta u_i$. Furthermore, $\nabla_d J$ and $\nabla_k J$ are bounded, and $\nabla_I J \in L^\infty (\Omega)^q$ for any network topology of protein-protein interactions.
\end{theorem}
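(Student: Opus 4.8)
The plan is to obtain the three gradient formulas by the standard adjoint (Lagrangian) technique and then to read off boundedness from the regularity already established in Lemma \ref{lem2} and in the definition of the global classical solution. First I would show that the control-to-state map $\theta = (d,k,I) \mapsto u[\theta]$ is Fr\'echet differentiable from $\Theta$ into the solution space, with directional derivative $z = Du[\theta]\,\delta\theta$ solving the \emph{linearized (sensitivity) system} $\partial_t z_i - d_i \Delta z_i = \sum_j \frac{\partial r_i}{\partial u_j} z_j + \delta d_i\, \Delta u_i + \frac{\partial r_i}{\partial k}\,\delta k$, with Neumann boundary data and initial value $z(\cdot,0) = \frac{\partial G(I)}{\partial I}\,\delta I$. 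Because $r$ is quadratic (Lemma \ref{lem}(\ref{A})) and $u \in L^\infty$, the coefficients $\partial r/\partial u$ are essentially bounded, so this linear parabolic system is well posed. Since $J$ in \eqref{obj} is quadratic in $u$ and hence trivially Fr\'echet differentiable, the chain rule yields $DJ[\theta]\,\delta\theta = \int_0^T\int_\Omega (Fu - c)^\top F z\,dx\,dt$.

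The second step is to eliminate $z$ using the adjoint state $\mu$ of \eqref{adjoint}, which exists as a global classical solution by the preceding proposition. I would multiply the sensitivity equation by $\mu$, integrate over $\Omega \times (0,T)$, and integrate by parts: in time using the terminal condition $\mu(\cdot,T)=0$ from \eqref{adjoint_final}, and in space using the matching Neumann conditions \eqref{nbc} and \eqref{adjoint_Nbc}, which annihilate all boundary terms. The definition $w = \frac{\partial r}{\partial u}^\top \mu - F^\top(Fu - c)$ is precisely what makes the interior terms containing $z$ cancel, leaving only the explicit dependence on $\delta d$, $\delta k$, and $\delta I$ through the source and the initial data. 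Collecting the surviving contributions reproduces \eqref{grad_d}--\eqref{grad_I}; in particular the initial-time boundary term $-\int_\Omega z(\cdot,0)^\top \mu(\cdot,0)\,dx$ becomes $-\frac{\partial G(I)}{\partial I}^\top \mu(\cdot,0)$, which is exactly \eqref{grad_I}.

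For boundedness I would treat the three gradients separately. For $\nabla_d J$, the integrand $\Delta u_i\,\mu_i$ is only known to be integrable away from $t=0$, so rather than bounding it directly I would integrate by parts once more in space to write the $i$th component as $\int_0^T\int_\Omega \nabla u_i \cdot \nabla \mu_i\,dx\,dt$, the boundary term again vanishing by the Neumann conditions; the Cauchy--Schwarz inequality together with $Du_i, D\mu_i \in L^2(\Omega\times(0,T))^\eta$ from \eqref{L2} then gives a finite bound. For $\nabla_k J$, since $r$ is quadratic and $u \in L^\infty$, the matrix $\partial r/\partial k$ lies in $L^\infty$, and $\mu \in L^\infty$ as a classical solution, so the integrand is essentially bounded on the finite-measure set $\Omega\times(0,T)$. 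For $\nabla_I J$, the affine $G$ makes $\partial G/\partial I$ a constant matrix, and $\mu \in L^\infty(\Omega\times[0,T])^N$ gives a time-slice $\mu(\cdot,0) \in L^\infty(\Omega)^N$; hence $\nabla_I J \in L^\infty(\Omega)^q$. All of these estimates hold for any network topology because the required regularity of $u$ and $\mu$ is guaranteed for any topology by the global existence theorem and the preceding proposition.

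The main obstacle is the rigorous Fr\'echet (as opposed to merely G\^ateaux) differentiability of the solution map, and it is exactly the limited regularity of $\Delta u$ near $t=0$---recall it belongs only to $L^p(\Omega\times(\epsilon,T))^N$---that makes this delicate. The source term $\delta d_i\,\Delta u_i$ in the sensitivity system is singular at the initial time, so I would need parabolic smoothing estimates that absorb this singularity while simultaneously showing that the quadratic remainder of $r$ is $o(\|\delta\theta\|)$ in the chosen norms. A related subtlety is that $I$ varies in the $L^\infty(\Omega)^q$ topology, so the remainder estimate in the initial-value direction must be carried out for a parabolic system with $L^\infty$ initial perturbations. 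Once these uniform estimates are in place, differentiability and the validity of the formal adjoint computation above follow, and the gradient formulas together with their boundedness are established.
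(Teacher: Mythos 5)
Your adjoint integration-by-parts and your boundedness arguments for $\nabla_d J$, $\nabla_k J$, and $\nabla_I J$ coincide with the paper's: the paper likewise rewrites $\int_\Omega \mu_i \Delta u_i\,dx$ as $-\int_\Omega D\mu_i\cdot Du_i\,dx$ and invokes Lemma \ref{lem2}, and uses $u,\mu\in L^\infty(\Omega\times[0,T])^N$ together with the affinity of $G$ for the other two gradients. The genuine gap sits exactly where you place your ``main obstacle'': you route the argument through Fr\'echet differentiability of the solution map $\theta\mapsto u[\theta]$, which you do not establish, and the singular source $\tilde{d}_i\Delta u_i$ near $t=0$ together with the $L^\infty(\Omega)^q$ topology on $I$ make that a substantially harder statement than what the theorem actually needs. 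As written, your proof is incomplete at its first step.

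The paper avoids this entirely: it never differentiates the solution map. Lemma \ref{lem3} proves only a Lipschitz-type estimate $\|\tilde{u}\|_{L^2(\Omega\times(0,T))^N}\le K\|\tilde{\theta}\|$ for $\tilde{u}=u[\theta+\tilde{\theta}]-u[\theta]$, by an energy/Gronwall argument in which the problematic term $\int \tilde{d}_i\tilde{u}_i\Delta u_i$ is integrated by parts into $-\int D\tilde{u}_i\cdot(\tilde{d}_i Du_i)$ and absorbed using $Du_i\in L^2(\Omega\times(0,T))^\eta$ from Lemma \ref{lem2} --- the same device you deploy later for $\nabla_d J$, used earlier. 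Because $J$ is exactly quadratic in $u$ and $r$ is at most quadratic in $(u,k)$ (Lemma \ref{lem}), every remainder in the expansion of $J(u[\theta+\tilde{\theta}])-J(u[\theta])$ --- including the integral-form Taylor remainder of $r$ paired against the bounded adjoint $\mu$ --- is $O(\|\tilde{u}\|_{L^2}^2+\|\tilde{\theta}\|^2)=O(\|\tilde{\theta}\|^2)=o(\|\tilde{\theta}\|)$, which is all that Fr\'echet differentiability of $J$ itself requires. To close your argument, either adopt this direct remainder estimate in place of differentiating the state map, or supply the parabolic regularity estimates for the sensitivity system that you currently only gesture at.
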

To show that \eqref{grad}, we begin by considering the variation of $u$ due to
the variation of the parameters and initial values $\theta + \tilde{\theta} \in \Theta$ such that $\| \tilde{\theta} \|$ is bounded by some constant, where
$\|{\theta} \| := ( \| {d} \|^2 + \| {k} \|^2 + \| {I} \|_{L^2(\Omega)^q}^2 )^{1/2}$. 
Let  $\tilde{u} = u[\theta + \tilde{\theta}] - u[\theta]$. 
Note that  $u[\theta + \tilde{\theta}]$ is the global classical solution of the reaction-diffusion system \eqref{main_eq} with $d+\tilde{d}, k+\tilde{k}$ and $I+\tilde{I}$ because it satisfies assumptions (\ref{a}), (\ref{b}), (\ref{c}), and $u^0 + \tilde{u}^0 = G(I + \tilde{I}) \in L^\infty(\Omega)^N$ with $u^0(x) + \tilde{u}^0(x)> 0$ for all $x\in \Omega$.
Then $\tilde{u}_i$,  $i = 1,\cdots,N$, solve the following equation:
\begin{subequations} \label{main_eq3}
\begin{align}
&\frac{\partial \tilde{u}_i}{\partial t} - d_i \Delta \tilde{u}_i = \tilde{d}_i \Delta u_i  + f_i \quad \text{in} \; \Omega \times (0,T] \label{pde3} \\
&\frac {\partial \tilde{u}_i }{\partial \nu} = 0 \quad \text{on} \quad \partial \Omega \times (0,T] \label{Nbc3} \\
& \tilde{u}_i(x,0) = \frac{\partial G_i(I)}{\partial I} \tilde{I} \quad \text{in} \;  \Omega \times \{t = 0\}, \label{init3}
\end{align}
\end{subequations} 
where $f_i$ is obtained using the Taylor expansion of $r$, i.e.,
\begin{equation} \nonumber
\begin{split}
f_i = \frac{\partial r_i(u,k)}{\partial u}  \tilde{u} +  \frac{\partial r_i(u,k)}{\partial k} \tilde{k} +\int_0^1 (1-s) \left ( \tilde{v}^\top D^2 r_i(v+s \tilde{v}) \tilde{v}  \right )\: ds
%
\end{split}
\end{equation}
with $v: = (u,k)$ and $\tilde{v} = (\tilde{u}, \tilde{k})$ due to the mean value theorem.
The first step is to prove the following lemma.
\begin{lemma}\label{lem3}
Let  $\tilde{u} = u[\theta + \tilde{\theta}] - u[\theta]$, where $\theta + \tilde{\theta} \in \Theta$ such that $\| \tilde{\theta} \|$ is bounded by some constant. Then,
\begin{equation}\label{high}
\| \tilde{u}  \|_{L^2(\Omega \times (0,T))^N} \leq K\| \tilde{\theta} \|
\end{equation}
for some constant $K$. 
\end{lemma}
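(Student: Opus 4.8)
The plan is to derive an energy estimate for the linearized difference system \eqref{main_eq3} and close it with Gronwall's inequality. Writing $E(t) := \|\tilde{u}(\cdot,t)\|_{L^2(\Omega)^N}^2$, I would multiply \eqref{pde3} by $\tilde{u}_i$, integrate over $\Omega$, and use the Neumann condition \eqref{Nbc3} with the divergence theorem (exactly as in the proof of Lemma \ref{lem2}) to obtain, for each $i$,
\begin{equation} \nonumber
\frac{1}{2}\frac{d}{dt}\|\tilde{u}_i\|_{L^2(\Omega)}^2 + d_i \|D\tilde{u}_i\|_{L^2(\Omega)}^2 = \int_\Omega \tilde{d}_i \Delta u_i\, \tilde{u}_i \, dx + \int_\Omega f_i\, \tilde{u}_i \, dx .
\end{equation}
Summing over $i=1,\dots,N$ produces a differential identity for $E(t)$ whose left-hand side carries the nonnegative dissipation $\sum_i d_i\|D\tilde u_i\|_{L^2(\Omega)}^2$, and the whole argument reduces to dominating the two families of terms on the right by $C E(t) + C\|\tilde\theta\|^2(1+g(t))$ for some $g \in L^1(0,T)$.

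The term involving $\Delta u_i$ is the main obstacle, since the classical solution gives $L^2$ control of $Du_i$ but \emph{not} of $\Delta u_i$ over the cylinder $\Omega\times(0,T)$. I would therefore integrate by parts first, using the Neumann condition \eqref{nbc} for $u$, so that $\int_\Omega \tilde d_i \Delta u_i\, \tilde u_i\,dx = -\tilde d_i \int_\Omega Du_i\cdot D\tilde u_i\,dx$, and then apply Young's inequality to bound this by $\tfrac{d_i}{2}\|D\tilde u_i\|_{L^2(\Omega)}^2 + \tfrac{\tilde d_i^2}{2d_i}\|Du_i\|_{L^2(\Omega)}^2$. The first piece is absorbed into the dissipation on the left, while the second is controlled by $\|\tilde\theta\|^2 g(t)$ with $g(t):=\sum_i \|Du_i(\cdot,t)\|_{L^2(\Omega)}^2$, which lies in $L^1(0,T)$ precisely by Lemma \ref{lem2}, i.e. \eqref{L2}. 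For the reaction contribution I would use the Taylor splitting of $f_i$. Because $r$ is at most quadratic (Lemma \ref{lem} (\ref{A})) and $u$ is essentially bounded while $k$ ranges over the compact box \eqref{constr_k}, the Jacobian entries $\partial r_i/\partial u_j$ and $\partial r_i/\partial k_j$ are bounded, so the two linear parts are dominated, after Cauchy--Schwarz and Young, by $C E(t) + \tfrac12 E(t) + C\|\tilde\theta\|^2$. For the remainder, the Hessian $D^2 r_i(v+s\tilde v)$ stays uniformly bounded along the segment since its entries are affine in $(u,k)$ and both $u[\theta]$ and $u[\theta+\tilde\theta]$ are $L^\infty$-bounded uniformly for bounded $\|\tilde\theta\|$; hence $|f_i^{\mathrm{rem}}| \le C(|\tilde u|^2 + |\tilde k|^2)$. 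The cubic integral $\int_\Omega |\tilde u|^2|\tilde u_i|\,dx$ is then handled by pulling out the uniform $L^\infty$ bound on $\tilde u$, giving $\le C E(t)$, and $\int_\Omega |\tilde k|^2|\tilde u_i|\,dx \le C\|\tilde\theta\|^2$ by Young (using that $|\tilde k|\le\|\tilde\theta\|$ is bounded).

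Collecting these estimates and discarding the nonnegative dissipation yields $\frac{d}{dt}E(t) \le C E(t) + C\|\tilde\theta\|^2 (1+g(t))$. The initial value is bounded by $E(0) \le C\|\tilde\theta\|^2$ because $G$ is affine, so $\tilde u(\cdot,0) = \frac{\partial G(I)}{\partial I}\tilde I$ in \eqref{init3} is linear in $\tilde I$ and $\|\tilde u(\cdot,0)\|_{L^2(\Omega)^N}^2 \le C\|\tilde I\|_{L^2(\Omega)^q}^2 \le C\|\tilde\theta\|^2$. Gronwall's inequality then gives $E(t) \le e^{CT}\big(E(0) + C\|\tilde\theta\|^2(T+\|g\|_{L^1(0,T)})\big) \le K^2\|\tilde\theta\|^2$ uniformly in $t\in[0,T]$, and integrating in time yields $\|\tilde u\|_{L^2(\Omega\times(0,T))^N}^2 = \int_0^T E(t)\,dt \le K^2 T \|\tilde\theta\|^2$, which is \eqref{high} after relabeling $K$. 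The only point needing care beyond these estimates is the rigorous justification of the formal energy identity under the stated regularity, which I would obtain by carrying out the computation on $\Omega\times(\epsilon,T)$ and letting $\epsilon \downarrow 0$, using $\tilde u \in C([0,T];L^1(\Omega)^N)\cap L^\infty(\Omega\times[0,T])^N$ and $u_t\in L^p$ away from $t=0$.
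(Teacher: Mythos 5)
Your proposal is correct and follows essentially the same route as the paper: an energy estimate obtained by multiplying \eqref{pde3} by $\tilde{u}_i$, integrating by parts on the $\tilde{d}_i\Delta u_i$ term so that Young's inequality absorbs part of it into the dissipation and the rest is controlled by $\|Du_i\|_{L^2(\Omega\times(0,T))}$ via Lemma \ref{lem2}, bounding $f_i\tilde u_i$ using the $L^\infty$ bounds on $u$ and $\tilde u$, and closing with Gronwall. The only differences are cosmetic (differential versus integrated form of Gronwall, and your added remark on justifying the identity near $t=0$), so no further comment is needed.
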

\begin{proof}
Fix $\tau \in (0,T)$ and multiply \eqref{pde3} by $\tilde{u}_i$ and integrate over $\Omega \times (0,\tau)$ for each $i=1,\cdots, N$.
\begin{equation} \nonumber
\begin{split}
\int_0^\tau \int_\Omega \frac{\partial \tilde{u}_i}{\partial t} \tilde{u}_i  + d_i  \|D \tilde{u}_i \|^2 \: dx dt = \int_0^\tau \int_\Omega (\tilde{d}_i \Delta u_i + f_i) \tilde{u}_i \: dx dt,
\end{split}
\end{equation} 
where we used the divergence theorem and the Neumann boundary condition \eqref{Nbc3} to obtain $\int_\Omega \tilde{u}_i  \Delta \tilde{u}_i \: dx = - \int_\Omega \| D \tilde{u}_i \|^2 \: dx$. We can rewrite it as
\begin{equation} \nonumber
\begin{split}
&\frac{1}{2}\int_\Omega \tilde{u}_i(x,\tau)^2 \: dx + d_i \int_0^\tau \int_\Omega \|D \tilde{u}_i \|^2 \: dx dt =  \frac{1}{2}\int_\Omega \tilde{u}_i(x,0)^2 \: dx + \int_0^\tau \int_\Omega \tilde{d}_i  \tilde{u}_i \Delta u_i +f_i \tilde{u}_i \: dx dt.
\end{split}
\end{equation}
Note that 
\begin{equation}\nonumber
\int_\Omega \tilde{d}_i \tilde{u}_i \Delta u_i \: dx = - \int_\Omega D  \tilde{u}_i \cdot (\tilde{d}_i D u_i ) \: dx \leq  \int_\Omega \frac{\epsilon}{2} \| D \tilde{u}_i \|^2 + \frac{1}{2\epsilon} \| \tilde{d}_i D u_i \|^2 \: dx
\end{equation}
for $\epsilon > 0$ due to the Schwarz inequality. Let us choose $\epsilon$ so that $ \epsilon < 2d_i$, then  
\begin{equation} \label{good}
\begin{split}
\int_\Omega \tilde{u}_i(x,\tau)^2 \: dx &\leq  \int_\Omega \tilde{u}_i(x,0)^2 \: dx + \int_0^\tau \int_\Omega \frac{\tilde{d}_i^2}{\epsilon} \| D u_i \|^2 +   C_0 (\| \tilde{k} \| ^2 + \| \tilde{u} \|^2) \: dx dt, \\
\end{split}
\end{equation}
for some constant $C_0 (\tilde{\theta})$ because $u, \tilde{u} \in L^\infty (\Omega \times [0,T])^N$ and $\| \tilde{\theta} \|$ is bounded by some constant $K_0$. Let $C_0^* = \sup_{\theta + \tilde{\theta} \in \Theta, \| \tilde{\theta} \| \leq K_0} C_0(\tilde{\theta})$. Summing \eqref{good} over $i = 1,\cdots,N$, we have
\begin{equation} \nonumber
\begin{split}
\int_\Omega \|\tilde{u}(x,\tau) \|^2 \: dx \leq C^* + C \int_0^\tau \int_\Omega \|\tilde{u} \|^2 \: dxdt, 
\end{split}
\end{equation}
where $C^* =\int_\Omega \|\tilde{u}(x,0)\|^2 \: dx +  \int_0^\tau \int_\Omega \sum_{i = 1}^N  \frac{\tilde{d}_i^2}{\epsilon} \| D u_i \|^2 + C \| \tilde{k} \|^2 \: dxdt$ and $C = NC_0^*$. 
We note that $C^* \leq \bar{C} \| \tilde{\theta}\|^2$ for some constant $\bar{C}$ since $D u_i \in L^2( \Omega \times (0,T))^\eta$, where $\eta$ is the dimension of $\Omega$, as shown in Lemma \ref{lem2}.
Using the Gronwall's inequality, we obtain
\begin{equation} \nonumber
\begin{split}
\int_\Omega \|\tilde{u}(x,\tau) \|^2 \: dx \leq C^* (1 + C \tau \exp(C \tau)).
\end{split}
\end{equation}
 Therefore, $\| \tilde{u}  \|_{L^2(\Omega \times (0,T))^N} \leq K \| \tilde{\theta}\|$ for some constant $K$ as desired.
\end{proof}

Lemma \ref{lem3} suggests that $\bold{O}/ \| \tilde{\theta} \|$ tends to $0$ as $\tilde{\theta} \to 0$ in $\Theta$, where 
 $\bold{O} =  O(\| \tilde{u} \|_{L^2(\Omega \times(0,{T}))^N}^2)+ O(\| \tilde{d} \|^2)+O(\| \tilde{k} \|^2) + O(\| \tilde{I} \|_{L^2(\Omega)^q}^2) $ denotes any higher-order terms.
We are ready to show that $J$ is Fr\'{e}chet differentiable by explicitly deriving the Fr\'{e}chet derivative of $J$. 
\begin{proof} (Theorem \ref{prop2})
To obtain the Fr\'{e}chet derivative of $J$, we consider the difference
\begin{equation} \nonumber
\begin{split}
J (u[\theta + \tilde{\theta}]) - J (u[\theta]) &= \int_0^{T}  \int_{\Omega} (F u - c)^\top F \tilde{u} \: dxdt  + \bold{O} \\
&-\int_0^{T} \int_\Omega \mu^\top \left ( -\tilde{u}_t + D \Delta \tilde{u} +\tilde{D} \Delta u + \frac{\partial r(u, k )}{\partial u}\tilde{u} +  \frac{\partial r(u, k)}{\partial k} \tilde{k} + \bold{O} \right )  dxdt\\ 
&=\int_0^{T} \int_{\Omega}  \left ( -\mu_t^\top -\Delta \mu^\top D - w(u,k,\mu)^\top \right ) \tilde{u} \; dxdt\\
&+\left ( \int_0^{T} \int_{\Omega} -\mu^\top S( \Delta u)  \: dxdt  \right) \tilde{d} +\left(  \int_0^{T} \int_{\Omega}  - \mu^\top \frac{\partial r(u, k)}{\partial k} \: dxdt  \right )\tilde{k} \\
&+ \int_{\Omega} -\mu(x,0)^\top \frac{\partial G(I)}{\partial I} \tilde{I}(x) \: dx +\bold{O},
\end{split}
\end{equation}
as $\tilde{\theta} \to 0$ in $\Theta$.
$D$ and $\tilde{D}$ denote the diagonal matrices with $i$th diagonal elements $d_i$ and $\tilde{d}_i$, respectively. In the first equality, we utilized the Taylor expansion of $J$ and added the integral of the PDE \eqref{pde3} over $\Omega \times (0,T)$, which equals zero. In the second inequality, we substituted $\int_0^T \mu(x,t)^\top \tilde{u}_t(x,t) \; dt = - \int_0^T \mu_t(x,t)^\top \tilde{u}(x,t) \; dt + \mu(x,T)^\top \tilde{u}(x,T) - \mu(x,0)^\top \tilde{u}(x,0)$, which is obtained using integration by parts with \eqref{adjoint_final} and \eqref{init3}. 
The first integral of the second equality equals zero due to the adjoint equation \eqref{adjoint_pde}. 
If $\tilde{d}$ and $\tilde{k}$ are fixed as zero, then $\bold{O} /  {\| \tilde{I} \|_{L^\infty(\Omega)^q}}  \to 0$ as $\tilde{I} \to 0$ in $L^\infty(\Omega)^q$.
Thus, $J$ is Fr\'{e}chet differentiable with respect to $I$, i.e.,
\begin{equation} \nonumber
\begin{split}
\frac{\| J(u[d, k, I + \tilde{I}]) - J(u[d,k,I]) -D_I J(I:\tilde{I})  \|}{ {\| \tilde{I} \|_{L^\infty(\Omega)^q}}} \to 0
\end{split}
\end{equation} 
as $\tilde{I} \to 0$ in $ L^\infty(\Omega)^q$, where 
\begin{equation} \nonumber
\begin{split}
D_I J(I:\tilde{I}) = \int_{\Omega} -\mu(x,0)^\top \frac{\partial G(I)}{\partial I} \tilde{I}(x) \; dx
\end{split}
\end{equation}
is the Fr\'{e}chet derivative at $I$ by definition. Therefore the gradient $\nabla_I J$ exists and is given as \eqref{grad_I}. Similarly, we can obtain $D_d J(d : \tilde{d})$ and $D_k J(k : \tilde{k})$ and derive $\nabla_d J$ and $\nabla_k J$ as \eqref{grad_d} and \eqref{grad_k}, respectively. 
The uniqueness of the gradients follows from the uniqueness of the Fr\'{e}chet derivatives.

We also need to check the boundedness of the gradients for any network topology of protein-protein interactions.
Using the divergence theorem, we obtain $\int_\Omega \mu_i \Delta u_i \: dx = -\int_\Omega D \mu_i \cdot D u_i \: dx \leq \frac{1}{2} \int_\Omega \| D \mu_i\|^2 + \| D u_i\|^2 \: dx$.
Therefore, the regularity results $D \mu_i, D u_i \in L^2(\Omega \times (0,T))^\eta$, $i = 1,\cdots,N$, by Lemma \ref{lem2}, yield that $\nabla_d J$ is bounded.
Furthermore,  for any network structure, $\nabla_k J$ is bounded because $r$ is smooth in $u$ and $u \in L^\infty(\Omega \times [0,T])^N$. The essential boundedness of $\mu$ also implies that of $\nabla_I J = - \frac{\partial G(I)}{\partial I} \mu(x,0)$.
\end{proof}
A numerical evaluation of $\nabla_I J$ might be impossible if $\mu(\: \cdot \:, 0)$ blew up. 
The global existence of a classical solution of the reaction-diffusion system \eqref{main_eq} allows the adjoint system \eqref{adjoint} to have a global classical solution, which in turn guarantees the boundedness of $\nabla_d J$, $\nabla_k J$, $\nabla_I J$ for any network topology.
These (essentially) bounded gradients are necessary for the efficient computational optimization approach that we propose below.

\subsection{Numerical Algorithm}

The problem \eqref{main_opt} is a constrained nonlinear optimization problem that is not generally convex. Thus, our goal is to find a locally optimal solution. A local optimum can be efficiently obtained with the appropriate nonlinear programming methods. In general, 
methods that utilize the gradient of the cost function have better convergence properties compared to those that do not use the gradient (e.g., pattern search methods) \cite{Gill1981, Nocedal2006}. 
%
Because we have derived the gradient information $\nabla_d J$, $\nabla_k J$ and $\nabla_I J$, we can use an efficient algorithm to simultaneously optimize both the parameters and initial values in \eqref{main_opt}.
More specifically, the gradients allow us to utilize quasi Newton-type methods with an approximation of the Hessian. These methods have higher convergence rates (super-linear convergence) compared to those that use an alternating variable or coordinate descent, which often converge slowly \cite{Nocedal2006}.

As seen in formulae \eqref{grad}, a numerical evaluation of the gradients  requires the solution values $u$ and $\mu$ of the reaction-diffusion system \eqref{main_eq} and of the adjoint system \eqref{adjoint}, respectively.
Because an analytic form of the solutions $u$ and $\mu$ is not generally available, our proposed algorithm also includes the numerical evaluation of these values. We can stably approximate $u$ and $\mu$ with bounded values on nodes that discretize $\Omega \times [0,T]$ because the reaction-diffusion system and adjoint system that we proposed do not blow up in finite time regardless of the protein network structure. The bounded approximation of $u$ and $\mu$ allows feasible computation of the gradients, particularly $\nabla_I J$, as discussed in the previous section. 
In summary, our numerical algorithm to solve the optimization problem \eqref{main_opt} is as follows:

\begin{enumerate}
\item Initialize $d$, $k$ and $I$.

\item Numerically solve the reaction-diffusion system \eqref{main_eq} to evaluate $u$. 

\item Numerically solve the adjoint system \eqref{adjoint} to evaluate $\mu$.
\item Compute the gradients $\nabla_d J$, $\nabla_k J$ and $\nabla_I J$ using \eqref{grad}.
\item Compute the gradients of the inequality constraints in \eqref{constr_d}, \eqref{constr_k} and \eqref{constr_I}.
\item Update $d$, $k$ and $I$ by computing the descent direction and the step size using the gradients obtained in Steps 4 and 5. Return to Step 2 if the convergence criteria are not satisfied.
\end{enumerate}

In Steps 2 and 3, we use an implicit version of a Cartesian grid finite difference method \cite{Yang2013} to solve the reaction-diffusion system \eqref{main_eq} and its adjoint system \eqref{adjoint} because this scheme easily handles a complex geometry of $\Omega$ using its level set representation. The method also ensures that the numerical solution of \eqref{main_eq} is nonnegative invariant. Alternatively, other methods can be used to compute the numerical solutions \cite{McCorquodale2001, Wolgemuth2010, Johnson1987}.
In Step 6, there are several applicable methods to choose the descent direction and step size for an optimization problem with equality and inequality constraints \cite{Gill1981, Nocedal2006}, including interior-point methods \cite{Byrd1999, Forsgren2002}, sequential quadratic programming (SQP) methods \cite{Boggs1995, Gill2002}, conjugate-gradient methods \cite{Fletcher1964, Polak1969},
and \noindent trust-region \noindent methods \cite{Celis1984, Conn1987}. I employ an interior-point method that is available in MATLAB (Mathworks, Inc., Natick, MA, USA).
This approach combines line search and trust-region methods to compute the descent direction and step size \cite{Waltz2006}. This hybrid algorithm is computationally efficient and robust; if the Hessian of the Lagrangian is locally convex and nonsingular, the algorithm performs the line search, which is computationally efficient; otherwise, the algorithm switches to a conjugate gradient trust-region step \cite{Byrd1999}, which requires no restriction on the Hessian of the Lagrangian. We also utilized the limited memory Broyden--Fletcher--Goldfarb--Shanno (L-BFGS) method \cite{Liu1989} to approximate the inverse of the Hessian matrix. The L-BFGS method significantly reduces the memory that is required to store the $n \times n$ approximate inverse Hessian $H$ by storing a few $n$-dimensional vectors that implicitly represent $H$. If we use $N_h$ nodes to represent the domain $\Omega$, then the dimension of the discretized initial value $I$ is $O(N_h \times q)$. Therefore, $n=O(N_h \times q)$, if we assume that the number of mass diffusivities and rate constants is significantly less than $N_h$. Thus, it may not be feasible to fully store an $H$ of size $O(N_h^2 \times q^2)$. The L-BFGS method resolves this problem of memory usage.

\section{Example: biochemical regulation of f-actin}

EphA2 is a receptor tyrosine kinase that is activated by binding with Ephrin-A1 ligands on the membrane of nearby cells. The EphA2/Ephrin-A1 complexes form microclusters and display inward transport. The radial transport of these complexes has received much attention because it is highly correlated with the tissue invasion potential of malignant tumor cells \cite{Salaita2010}. 
F-actin is an important EphA2 downstream signaling protein (polymer) because it contributes to the determination of a cell's motility, morphology, and other mechanical properties \cite{Pollard2003, Gardel2006}. Furthermore, the actin cytoskeleton is hypothesized to control the geometric distribution of the EphA2/Ephrin-A1 complexes \cite{Salaita2010}. In addition, the downstream signaling pathways of EphA2 involve Rho family GTPases to regulate actin polymerization \cite{Hall1998}. 

\begin{figure}[tp]
\begin{center}
\includegraphics[width = 3.5in]{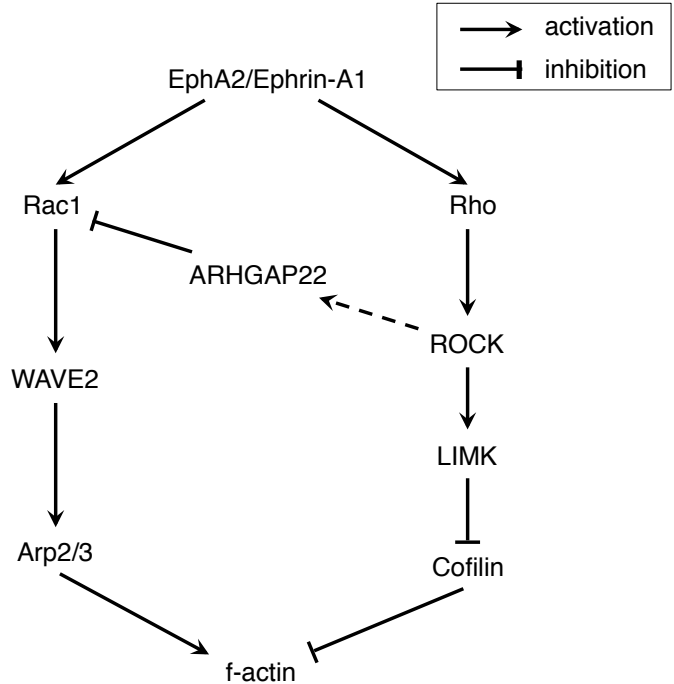}
\caption{EphA2 signaling pathways to f-actin.
\label{fig:pathway}}
\end{center}
\end{figure}

\begin{figure}[tp]
\begin{center}
\includegraphics[width = 4.5in]{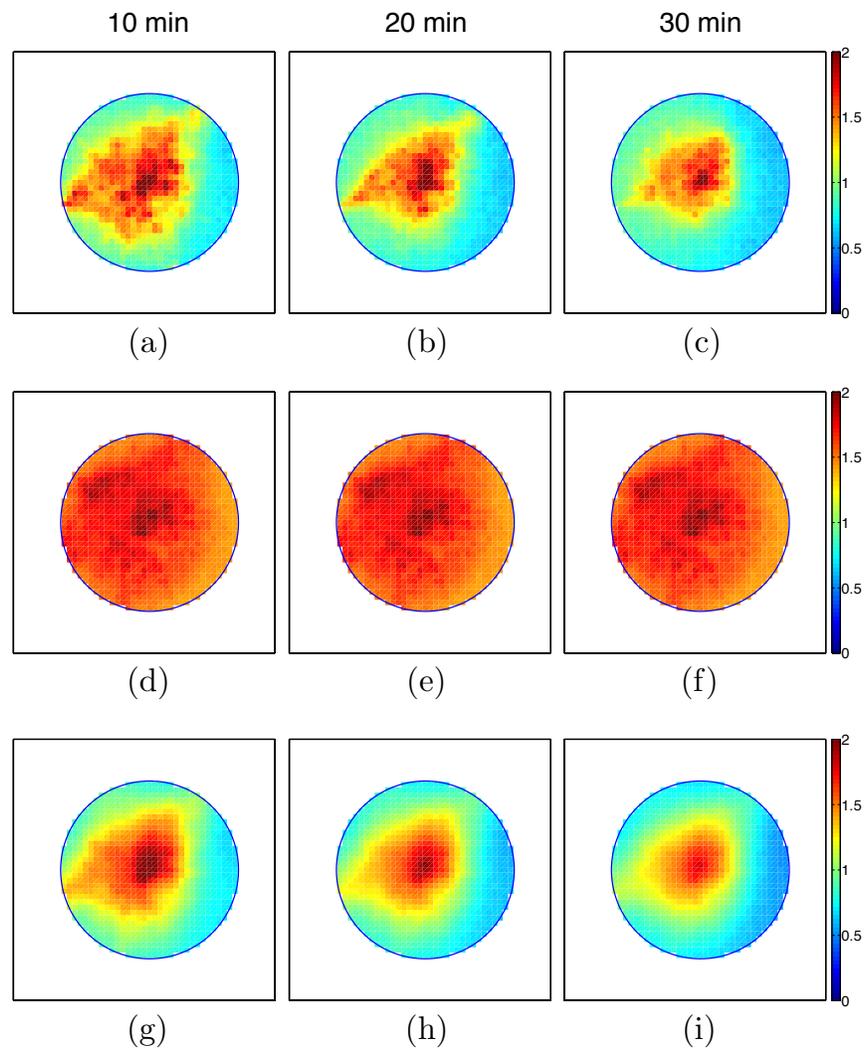}
\caption{Comparison between the data and the solution of the reaction-diffusion model. (a)--(c) f-actin distribution data in an MDA-MB-231 cell (courtesy of Jay T. Groves); (d)--(f)  f-actin distribution predicted by the reaction-diffusion model with random parameters and initial values;  (g)--(h) f-actin distribution predicted by the reaction-diffusion model with estimated parameters and initial values.
\label{fig:actin}}
\end{center}
\end{figure}

In this example, we focus on the signaling network from EphA2/Ephrin-A1 to f-actin to model the biochemical regulation of f-actin through spatio-temporal interactions of important protein species, such as Rho GTPases, Arp2/3 complexes, and Cofilins.
We propose a schematic diagram of the signaling pathways in Figure \ref{fig:pathway} based on the activation-inhibition relations of the protein species (see the SI Text for details). 
Using the chemical kinetics model that we have proposed, we model the spatio-temporal dynamics of the biochemical signals that are transmitted through this protein network with a system of $33$ reaction-diffusion PDEs (see the SI Text for details). The reaction-diffusion system and its corresponding adjoint system have global classical solutions because the reaction kinetics model satisfies assumptions (\ref{a}), (\ref{b}) and (\ref{c}).
Our goal is to identify the mass diffusivities, $d_1, \cdots, d_{33}$, rate constants, $k_1, \cdots, k_{48}$ and initial values, $I_1, \cdots, I_{32}$ of the model given the data on f-actin distribution over the time interval $[0, 10s]$. Note that the initial values $I = (u_1^0, \cdots, u_{32}^0)$ are unknown because only the measurement of $u_{33}$, which represents the concentration of f-actin, is available. Therefore, we set $G(I) = (I, u_{33}^0)$ and $F u = u_{33}$ to compute the cost function as the difference between $u_{33}$ and its measurement. The f-actin distribution data are shown in Figures \ref{fig:actin} (a)--(c). We first randomly chose the parameters and initial values within the 
ranges specified in the SI Text and simulated the reaction-diffusion model with these values. The simulation results (Figures \ref{fig:actin} (d)--(f)) show that the model is unable to capture the contractile redistribution of f-actin that is observed in the data.

We now set this arbitrarily chosen set of parameters and initial values as the initial guess of $d$, $k$ and $I$ of the adjoint-based optimal control algorithm that was proposed in the previous section, and solve the optimization problem \eqref{main_opt} using this algorithm.
The optimized parameters and initial values are shown in Figures \ref{fig:diffusion}, \ref{fig:reaction} and \ref{fig:initial}. The optimized parameters and initial values satisfy the range constraints. 
As shown in Figure \ref{fig:actin} (g)--(i), the reaction-diffusion model with optimal parameters and initial values reproduces the dynamic changes observed in the experimental f-actin distribution. In particular, the model correctly captures the observed contraction of the f-actin distribution over time. Therefore, this model supports the hypothesized interactions of the proteins in the biochemical network in Figure \ref{fig:pathway}. This example suggests that a hypothesized protein network should be tested by an identified model. 
Let us suppose that a reaction-diffusion model of a signaling network with parameters randomly chosen in a physically correct range does not reproduce the data. In this situation, the signaling network may be correct but ruled out due to the inadequate choice of the parameters and initial values. Because the identified parameters and initial values reproduce the data, however, we are not misled to invalidate the potentially correct protein-protein interaction network. Therefore, the identification of parameters and initial values is useful for the validation and invalidation of biological hypotheses as well as the construction of models that are capable of describing certain biochemical processes.

\section{Conclusion}

We have presented a novel reaction kinetics model with which a reaction-diffusion system has a unique global classical solution regardless of the protein network structure.
The classical solution has nonnegative invariance and does not blow up in finite time. These features allow the classical solution to be used to model spatio-temporal signal transductions in a protein network.
We posed an optimization problem 
to determine a set of parameters and initial values with which the reaction-diffusion system best matches the given data. 
By utilizing the adjoint system of this reaction-diffusion systems, we derived the analytic form of the gradients of the cost function with respect to the parameters and initial values. For any network topology, we showed the existence, uniqueness and boundedness of these gradients.
The gradient information allowed us to optimize all parameters and initial values simultaneously using an efficient and robust nonlinear programming method. 
An identified reaction-diffusion system that models the spatio-temporal signaling of f-actin through a hypothesized protein network successfully reproduced the given experimental data.

\section*{Acknowledgments}
The authors would like to thank Professor Lawrence Craig Evans for helpful discussions about the regularity of the classical solution of reaction-diffusion systems. This research was supported by NCI PS-OC program under grant number 29949-31150-44-IQPRJ1- IQCLT.

\appendix
\section{Proof of the global existence of a classical solution to the adjoint system}
\label{app1}

We first derive the following form of the adjoint PDEs by applying a change of variables $\zeta(\cdot,t) = \mu(\cdot,T-t)$ in \eqref{adjoint}: for $i = 1,\cdots,N$,
\begin{equation} \label{adjointS}
\begin{split}
& \partial_t \zeta_i - d_i \Delta \zeta_i = z_i(\zeta,x,t) \quad \text{in} \;  \Omega \times (0,T)\\
&\frac{\partial \zeta_i}{\partial \nu} = 0 \quad \text{on} \; \partial \Omega \times (0,T)\\
&\zeta_i(x,0)=0 \quad \text{in} \;  \Omega,
\end{split}
\end{equation}
where $z_i(\zeta,x,t) := w_i(\mu(x,T-t), u(x,T-t),k)$ for all $x\in \Omega$ and $t\in [0,T]$, for each $i=1,\cdots,N$.
 It is known that there exists $T>0$ such that \eqref{adjointS} has a unique classical solution in $[0,T)$ \cite{Amann1985, Henry1981}. Let $T_{\max}$ denote the greatest of such $T$'s. If we show that $T_{\max} = \infty$, then the global existence is guaranteed. Let $\bold{I} = \bold{I}_1 \times \cdots \times \bold{I}_N$ be a subset of $\mathbb{R}^N$ in which \eqref{adjointS} is invariant.
We propose a Lyapunov function $V \in C^2(\bold{I}; \mathbb{R})$ and $v_i \in C^2(\bold{I}_i; \mathbb{R})$, $i=1,\cdots,N$, satisfying

\begin{enumerate}[(I)]
\item \label{A1}
$V(\xi) = \sum_{i=1}^N v_i(\xi_i)$ for all $ \xi \in \bold{I}$. 
\item \label{A2}
$v_i(\xi_i) \geq 0$, and $v_i^{\prime\prime}(\xi_i) \geq 0$ for all $\xi_i \in \bold{I}_i$, for each $i=1,\cdots,N$.
\item \label{A3}
$V(\xi) \to \infty$ if and only if $\| \xi \|_2 \to \infty$ in $\bold{I}$.
\item \label{A4}
There exists $A \in \mathbb{R}^{N \times N}$ with $A_{ij} \geq 0$ and $A_{ii} > 0$ for $i,j = 1,\cdots,N$ such that for each $j = 1,\cdots, N$, 
$\sum_{i=1}^j A_{ji} v_i^{\prime}(\xi_i)z_i(\xi) \leq C_1(H(\xi))^{p_1} + C_2$ for all $\xi \in \bold{I}$ for some nonnegative constants $p_1$, $C_1$, and $C_2$.
\item \label{A5}
There exist nonnegative constants $p_2, C_3, C_4$ such that $v_i^\prime (\xi_i) z_i(\xi) \leq C_3(V(\xi))^{p_2} + C_4$ for all $\xi \in \bold{I}$, for each $i=1,\cdots, N$.
\item \label{A6}
There exist nonnegative constants $C_5, C_6$ such that $\nabla V(\xi) \cdot z(\xi) \leq C_5 V(\xi) + C_6$ for all $\xi \in \bold{I}$.
\end{enumerate}
Under these assumptions, we have the first result that the Lyapunov function $V(\zeta(\cdot,t))$ is bounded in $L^1$ (see \cite{Morgan1989} Theorem 3.3.):

\begin{theorem} \label{thm1}
Suppose that $V \in C^2(\bold{I}; \mathbb{R})$ satisfies \emph{(\ref{A1})--(\ref{A3}),(\ref{A6})}, and $\zeta$ is a classical solution of \eqref{adjointS} for $[0, T_{\max})$. Then, there exists a function $f \in C([0, \infty))$ such that 
\begin{equation} \nonumber
\| V(\zeta(\cdot,t)) \|_{L^1(\Omega)} \leq f(t)
\end{equation}
for all $t \in (0,T_{\max})$.
\end{theorem}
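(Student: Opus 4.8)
The plan is to control the total mass of the Lyapunov function and close a Gronwall estimate. Define
\[
\Phi(t) := \int_\Omega V(\zeta(x,t)) \, dx .
\]
By (\ref{A1}) and (\ref{A2}) we have $V \geq 0$ on $\bold{I}$, so $\Phi(t) = \| V(\zeta(\cdot,t)) \|_{L^1(\Omega)}$ and it suffices to produce a continuous $f$ on $[0,\infty)$ with $\Phi(t) \leq f(t)$ for all $t \in (0,T_{\max})$. Since $\zeta$ is a classical solution it is essentially bounded on $\Omega \times [0,T']$ for each $T' < T_{\max}$, and $V \in C^2$, so $\Phi$ is finite on $(0,T_{\max})$ and the integrands below are integrable.

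First I would differentiate $\Phi$ in time and use the chain rule together with the adjoint PDE $\partial_t \zeta_i = d_i \Delta \zeta_i + z_i$ and the separable form (\ref{A1}) of $V$ to write
\[
\Phi'(t) = \int_\Omega \sum_{i=1}^N v_i'(\zeta_i)\, d_i \Delta \zeta_i \, dx + \int_\Omega \nabla V(\zeta) \cdot z(\zeta) \, dx .
\]
For the diffusion part I would integrate by parts; the Neumann condition $\partial \zeta_i/\partial \nu = 0$ annihilates the boundary term, leaving
\[
\int_\Omega v_i'(\zeta_i)\, d_i \Delta \zeta_i \, dx = - d_i \int_\Omega v_i''(\zeta_i)\, |\nabla \zeta_i|^2 \, dx \leq 0 ,
\]
where nonpositivity uses $d_i > 0$ together with the convexity $v_i'' \geq 0$ from (\ref{A2}). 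For the reaction part I would invoke (\ref{A6}) pointwise and integrate, obtaining $\int_\Omega \nabla V(\zeta)\cdot z(\zeta)\,dx \leq C_5 \Phi(t) + C_6 |\Omega|$. Combining the two contributions yields the scalar differential inequality
\[
\Phi'(t) \leq C_5\, \Phi(t) + C_6 |\Omega| .
\]

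To finish, I would apply Gronwall's inequality. The initial data $\zeta(\cdot,0) = 0$ give $\Phi(0) = V(0)\,|\Omega|$, a finite constant, so integrating the inequality gives the explicit bound
\[
\Phi(t) \leq \Big( \Phi(0) + \tfrac{C_6 |\Omega|}{C_5} \Big) e^{C_5 t} - \tfrac{C_6 |\Omega|}{C_5}
\]
when $C_5 > 0$ (and the affine bound $\Phi(0) + C_6|\Omega|\, t$ when $C_5 = 0$). Taking $f$ to be this right-hand side, which is continuous on all of $[0,\infty)$, completes the argument. Notice that only (\ref{A1}), (\ref{A2}), and (\ref{A6}) enter the estimate; (\ref{A3}) is a standing coercivity hypothesis used downstream to convert this $L^1$ control of $V$ into pointwise control of $\zeta$.

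I expect the genuine work to be the regularity justification rather than the algebra: differentiating under the integral sign to obtain $\Phi'(t) = \int_\Omega \nabla V(\zeta)\cdot \partial_t\zeta\,dx$, and carrying out the integration by parts, both require more than the bare classical-solution regularity near the endpoints. I would handle this by first running the computation on an interior slab $[\epsilon, T']$ with $0 < \epsilon < T' < T_{\max}$, where the solution enjoys the $L^p$ bounds on $\zeta_t, \zeta_{x_i}, \zeta_{x_i x_j}$ recorded in the definition of a classical solution, so that dominated convergence legitimizes the differentiation and Green's identity applies; I would then pass to the limits $\epsilon \to 0$ and $T' \to T_{\max}$ using the continuity $\zeta \in C([0,T_{\max}); L^1(\Omega)^N)$ and the value $\zeta(\cdot,0) = 0$.
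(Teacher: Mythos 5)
Your argument is correct, and it differs from the paper in a structural way: the paper does not prove Theorem \ref{thm1} at all, but imports it verbatim as Theorem 3.3 of the cited work of Morgan (1989). Your proof is the standard self-contained argument that underlies that citation: the separable form (\ref{A1}) together with the convexity $v_i''\geq 0$ from (\ref{A2}) makes the diffusion contribution dissipative after integration by parts under the homogeneous Neumann condition, (\ref{A6}) gives the affine bound $\nabla V(\zeta)\cdot z \leq C_5 V(\zeta)+C_6$ on the reaction contribution, and Gronwall closes the estimate; the zero data for $\zeta$ at $t=0$ (the reversed final condition of the adjoint system) even give the explicit constant $\Phi(0)=V(0)\,|\Omega|$. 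Your remark that (\ref{A3}) plays no role in this particular estimate is also accurate --- it is the coercivity needed downstream to convert $L^1$ control of $V(\zeta)$ into control of $\zeta$. The one point requiring genuine care is the one you flag yourself: differentiating under the integral sign and applying Green's identity need the interior $L^p$ regularity of $\zeta_t$, $\zeta_{x_i}$, $\zeta_{x_ix_j}$, and your plan of running the computation on slabs $[\epsilon,T']$ with $0<\epsilon<T'<T_{\max}$ and then letting $\epsilon\to 0$ via $\zeta\in C([0,T_{\max});L^1(\Omega)^N)$ and the local boundedness of $\zeta$ (so that $V$ is Lipschitz on the relevant range and $\Phi(\epsilon)\to\Phi(0)$) is the right way to make this rigorous. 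What the paper's citation buys is generality --- Morgan's theorem is stated for broader structure conditions than the affine-in-$\zeta$ reaction term actually used in this appendix; what your proof buys is a transparent verification in exactly the setting needed here, where $z$ is affine in $\zeta$ with $L^\infty$ coefficients.
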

This $L^1$ boundedness can be used to prove global existence with the following result (see \cite{Morgan1989} Theorem 2.4.):

\begin{theorem} \label{thm2}
Suppose that $V \in C^2(\bold{I}; \mathbb{R})$ satisfy \emph{(\ref{A1})--(\ref{A5})}, and $\zeta$ is a classical solution of \eqref{adjointS} on $[0, T_{\max})$. If there exists a function $g \in C([0, \infty))$ and a positive constant $\alpha$ such that 
\begin{equation} \nonumber
\int_0^t \int_{\Omega} (V(\zeta(x,s))^\alpha \: dxds \leq g(t) \quad \text{for all $t \in (0,T_{\max})$}, 
\end{equation}
and $p_1 < 1+2\alpha/(\eta + 2)$, where $\eta$ is the dimension of $\Omega$, then $T_{\max} = \infty$. 
\end{theorem}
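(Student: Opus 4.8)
The plan is to argue by contradiction: assuming $T_{\max} < \infty$, I would derive a uniform $L^\infty$ bound on $\zeta$ over $\Omega \times (0, T_{\max})$, contradicting the blow-up alternative furnished by the local existence theory of \cite{Amann1985, Henry1981}. That theory guarantees that a maximal classical solution on $[0, T_{\max})$ with $T_{\max} < \infty$ must satisfy $\limsup_{t \to T_{\max}^-} \| \zeta(\cdot, t) \|_{L^\infty(\Omega)^N} = \infty$. Properties (\ref{A1})--(\ref{A3}) make $V$ nonnegative, separable, and coercive in $\| \cdot \|_2$, so a bound on $\| V(\zeta(\cdot,t)) \|_{L^\infty(\Omega)}$ controls $\| \zeta(\cdot,t) \|_{L^\infty(\Omega)^N}$; it therefore suffices to bound $V(\zeta)$ uniformly on $\Omega \times (0, T_{\max})$.

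The engine is a bootstrap on the space-time integrability of $V(\zeta)$, powered by the smoothing estimates of the Neumann heat semigroups. Representing each component by Duhamel's formula $\zeta_i(t) = \int_0^t e^{(t-s) d_i \Delta} z_i(\zeta(s)) \, ds$, where the initial data vanish by \eqref{adjointS}, I would use the bound $\| e^{t d_i \Delta} \|_{L^q(\Omega) \to L^r(\Omega)} \le C\, t^{-\frac{\eta}{2}(\frac{1}{q} - \frac{1}{r})}$ for $q \le r$. The growth hypotheses (\ref{A4}) and (\ref{A5}) control the admissible combinations of $v_i'(\zeta_i) z_i(\zeta)$ by powers $(V(\zeta))^{p_1}$ and $(V(\zeta))^{p_2}$ of the Lyapunov function; consequently, if $V(\zeta) \in L^{q}(\Omega \times (0, T_{\max}))$ is known, the forcing lies in $L^{q/p_1}$, and a single application of the smoothing estimate promotes $V(\zeta)$ to a strictly larger integrability exponent. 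The base of the recursion is precisely the hypothesis $\int_0^t \int_\Omega (V(\zeta))^\alpha \, dx\, ds \le g(t)$, that is, $V(\zeta) \in L^\alpha(\Omega \times (0, T_{\max}))$.

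The quantitative heart of the proof is that the strict inequality $p_1 < 1 + 2\alpha/(\eta + 2)$ forces a \emph{definite} gain of integrability: it is exactly the condition under which the time singularity $t^{-\frac{\eta}{2}(\frac{1}{q} - \frac{1}{r})}$ remains integrable while the exponent is raised, so the sequence of integrability exponents, anchored at $\alpha$, strictly increases and, after finitely many steps, reaches an exponent large enough to yield $L^\infty(\Omega \times (0, T_{\max}))$. The lower-triangular access pattern $\sum_{i=1}^j A_{ji} v_i'(\zeta_i) z_i(\zeta)$ in (\ref{A4}), with $A_{ij} \ge 0$ and $A_{ii} > 0$, is what makes this feasible for a system: even when no single $z_i$ admits a favorable pointwise bound, the triangular combinations do, so one bootstraps the components in the order dictated by $A$, treating the already-controlled lower components as known data at each stage.

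The step I expect to be the main obstacle is this interlocking of the integrability bootstrap with the triangular structure: one must verify that the definite gain survives when the control on component $i$ is routed through the intermediate sums of (\ref{A4}) rather than through $z_i$ in isolation, and that the finitely many smoothing steps compose without the accumulated time singularities becoming non-integrable. Tracking this exponent arithmetic, with $\alpha$ and $\eta$ entering through the margin $2\alpha/(\eta+2)$, is the delicate calculation. Once $V(\zeta)$, and hence $\zeta$, is bounded in $L^\infty(\Omega \times (0, T_{\max}))$, the blow-up alternative is violated, which forces $T_{\max} = \infty$.
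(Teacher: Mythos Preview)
The paper does not prove this theorem at all: it is quoted verbatim as Theorem~2.4 of Morgan~\cite{Morgan1989}, with no argument supplied. Your proposal therefore cannot be compared against a proof in the paper, only against Morgan's original.

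That said, your sketch is faithful to Morgan's strategy. The contradiction via the blow-up alternative, the Duhamel representation against the Neumann heat semigroup, the $L^q \to L^r$ smoothing $\|e^{t d_i \Delta}\|_{L^q \to L^r} \le C t^{-\frac{\eta}{2}(\frac{1}{q}-\frac{1}{r})}$, and the interpretation of $p_1 < 1 + 2\alpha/(\eta+2)$ as the margin that makes the bootstrap terminate in finitely many steps are all the right ingredients. You also correctly identify that the lower-triangular structure of (\ref{A4}) is what lets the bootstrap propagate through the system component by component, since the pointwise control is on the combinations $\sum_{i \le j} A_{ji} v_i'(\zeta_i) z_i$ rather than on each $z_i$ individually. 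The one place your outline is slightly imprecise is the passage from bounds on $v_i'(\zeta_i) z_i$ to bounds on $z_i$ itself as Duhamel forcing; in Morgan's argument this is handled by working with suitable functionals of the $\zeta_i$ (essentially the $v_i$) rather than the raw components, which is consistent with your remark that one bootstraps ``in the order dictated by $A$.'' As a plan this is sound; a full write-up would require the exponent bookkeeping you flag as the main obstacle.
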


Combining Theorem 1 and Theorem 2 with $\alpha=1$, we now prove the global existence of a classical solution to \eqref{adjointS}. If a Lyapunov function $V$ satisfies (\ref{A1})--(\ref{A6}) with $p_1 = 1$, then Theorem \ref{thm1} and Theorem \ref{thm2} are automatically satisfied; thus, we conclude $T_{\max} = \infty$. Let $\bold{I} = \mathbb{R}^N$, and $V(\zeta) : = \sum_{i=1}^N \zeta_i^2 \in C^2(\bold{I}; \mathbb{R})$. Then, by setting $v_i(\zeta_i) = \zeta_i^2$, (\ref{A1}) and (\ref{A2}) are satisfied. Note that $V(\zeta(x,t))$ itself is the square of the Euclidean norm of $\zeta(x,t)$, we have (\ref{A3}). Recall that $w$ is affine in $\mu$, so $z$ is affine in $\zeta$. 
In addition, $u$ is in $L^\infty(\Omega \times [0,T])^N$, and $r$ is smooth.
Thus, we are able to choose $a_i = (a_{i1}, \cdots, a_{iN}) \in L^{\infty}(\Omega \times [0,T])^N$ and $b_i \in L^{\infty}(\Omega \times [0,T])$ such that $z_i(\zeta(x,t),x,t) = a_i(x,t) \cdot \zeta(x,t) +b_i(x,t)$ for all $x\in \Omega$ and $t\in [0,T]$, for each $i=1,\cdots,N$.
Let $A$ be an $N \times N$ identity matrix, then we have, for each $j=1,\cdots,N$, 
\begin{equation}\nonumber
\begin{split}
&\sum_{i=1}^j A_{ji} v_i^{\prime}(\zeta_i(x,t))z_i(\zeta(x,t),x,t)\\ 
& = v_j^{\prime}(\zeta_j(x,t))z_j(\zeta(x,t),x,t)\\ 
&= 2 \zeta_j(x,t) (a_j(x,t) \cdot \zeta(x,t) + b_j(x,t)) \\
&\leq \sum_{k=1}^N |a_{jk}(x,t)|(\zeta_j(x,t)^2 + \zeta_k(x,t)^2) + \zeta_j(x,t)^2 + b_j(x,t)^2\\
&\leq \left ( 1+ (1+ N) \max_{1 \leq j,k \leq N} \|a_{jk} \|_{L^{\infty}(\Omega \times [0,T])}  \right ) V(\zeta(x,t)) + \max_{1 \leq j\leq N} \|b_j \|_{L^{\infty}(\Omega \times [0,T])}^2.
\end{split}
\end{equation}
In the third inequality, we utilized the Schwartz inequality $2pq \leq p^2 +q^2$ for $p,q \in \mathbb{R}$. In the fourth inequality, we used $\zeta_j(x,t) \leq V(\zeta(x,t))$.
If we set $p_1 = p_2 := 1$, $C_1 = C_3 := 1+ (1+ N) \max_{1 \leq j,k \leq N} \|a_{jk} \|_{L^{\infty}(\Omega \times [0,T])}$ and $C_2 = C_4 := \max_{1 \leq j\leq N} \|b_j \|_{L^{\infty}(\Omega \times [0,T])}^2$, then (\ref{A4})  and (\ref{A5}) are satisfied. Finally, we consider the condition (\ref{A6}).
\begin{equation}\nonumber
\begin{split}
&\nabla V(\zeta(x,t)) \cdot z(x,t,\zeta(x,t)) \\
&= \sum_{i=1}^N 2 \zeta_i(x,t) (a_i(x,t) \cdot \zeta(x,t) + b_i(x,t)) \\
&\leq N \left ( 1+ (1+ N) \max_{1 \leq j,k \leq N} \|a_{jk} \|_{L^{\infty}(\Omega \times [0,T])}  \right ) V(\zeta(x,t)) + N \max_{1 \leq j\leq N} \|b_j \|_{L^{\infty}(\Omega \times [0,T])}^2,
\end{split}
\end{equation} 
due to the same argument in the previous inequality. Letting $C_5 := N[1+ (1+ N) \max_{1 \leq j,k \leq N}$\\$\|a_{jk} \|_{L^{\infty}(\Omega \times [0,T])}]$ and $C_6 :=N \max_{1 \leq j\leq N} \|b_j \|_{L^{\infty}(\Omega \times [0,T])}^2$, we have (\ref{A6}). 
Therefore the Lyapunov function $V$ satisfies (\ref{A1})--(\ref{A6}). Thus, using Theorem \ref{thm1} and Theorem \ref{thm2}, we conclude that there exists a global classical solution to the adjoint system \eqref{adjointS}.


\section{Biochemical signaling from Ephrin-A1 to f-actin} \label{app2}

In this section, we describe the reactions between biochemical species in the f-actin signaling network (Figure \ref{fig:pathway}) in a breast cancer cell.

\begin{enumerate}
\item
{Ephrin-A1 and Rho family GTPases}\\
Ephrin-A1 is a ligand that binds with the EphA2 receptor and then phosphorylates it in an adjacent cell. 
Subsequently, EphA2/Ephrin-A1 activates RhoA via focal adhesion kinase (FAK) \cite{Parri2009}. 
EphA2/Ephrin-A1 also activates Rac1 via the Vav family guanine nucleotide exchange factors \cite{Hunter2006}.
The following chemical kinetic equations model the interaction between EphA2 and the Rho family of GTPases:

\begin{equation} \nonumber
\begin{split}
\mbox{Ephrin-A1} + \mbox{EphA2} &\overset{k_1}{\underset{k_2} {\rightleftharpoons}} p\mbox{EphA2/Ephrin-A1}\\
p\mbox{EphA2/Ephrin-A1} + \mbox{Rho} &\overset{k_3}{\underset{k_4} {\rightleftharpoons}} p\mbox{EphA2/Ephrin-A1/Rho}\\
p\mbox{EphA2/Ephrin-A1/Rho} &\overset{k_5}{\underset{k_6} {\rightleftharpoons}} p\mbox{EphA2/Ephrin-A1} + p\mbox{Rho}\\
\end{split}
\end{equation}
\begin{equation} \nonumber
\begin{split}
p\mbox{EphA2/Ephrin-A1} + \mbox{Rac1} &\overset{k_7}{\underset{k_8} {\rightleftharpoons}} p\mbox{EphA2/Ephrin-A1/Rac1}\\
p\mbox{EphA2/Ephrin-A1/Rac1} &\overset{k_9}{\underset{k_{10}} {\rightleftharpoons}} p\mbox{EphA2/Ephrin-A1} + p\mbox{Rac1}.\\
\end{split}
\end{equation}
Here, we used the notation $\bold{A/B}$ to denote a complex composed of $\bold{A}$ and $\bold{B}$.\\

\item 
{Interplay between Rac1 and Rho signaling}

Rho-associated protein kinase (ROCK) indirectly activates ARHGAP22 through an unknown mechanism \cite{Sanz-Moreno2008}. To model the indirect activation, we introduce an unknown species, U, which is activated by ROCK and activates ARHGAP22. It is also shown that ARHGAP22 inhibits Rac1 \cite{Sanz-Moreno2008}. 

\begin{equation} \nonumber
\begin{split}
p\mbox{ROCK} + \mbox{U} &\overset{k_{11}}{\underset{k_{12}} {\rightleftharpoons}} p\mbox{ROCK/U} \\
p\mbox{ROCK/U} &\overset{k_{13}}{\underset{k_{14}} {\rightleftharpoons}} p\mbox{ROCK} + p\mbox{U} \\
p\mbox{U} + \mbox{ARHGAP22} &\overset{k_{15}}{\underset{k_{16}} {\rightleftharpoons}} p\mbox{U/ARHGAP22} \\
p\mbox{U/ARHGAP22} &\overset{k_{17}}{\underset{k_{18}} {\rightleftharpoons}} p\mbox{U} + p\mbox{ARHGAP22} \\
p\mbox{ARHGAP22} + p\mbox{Rac1} &\overset{k_{19}}{\underset{k_{20}} {\rightleftharpoons}} p\mbox{ARHGAP22/Rac1}\\
p\mbox{ARHGAP22/Rac1} &\overset{k_{21}}{\underset{k_{22}} {\rightleftharpoons}} p\mbox{ARHGAP22} + \mbox{Rac1}. \\
\end{split}
\end{equation}

\item 
{Actin polymerization via a Rac1 signaling pathway} \\

The actin nucleation protein WAVE2 is activated by Rac1 \cite{Sanz-Moreno2008} in a breast cancer cell. Arp2/3 (actin-related protein 2 and 3) complexes are activated by binding with WAVE2 \cite{Yamaguchi2005}. Then, the actin filaments are nucleated and branched by Arp2/3 complexes \cite{Mullins1998}.

\begin{equation} \nonumber
\begin{split}
p\mbox{Rac1} + \mbox{WAVE2} &\overset{k_{23}}{\underset{k_{24}} {\rightleftharpoons}} p\mbox{Rac1/WAVE2} \\
p\mbox{Rac1/WAVE2} &\overset{k_{25}}{\underset{k_{26}} {\rightleftharpoons}} p\mbox{Rac1} + p\mbox{WAVE2} \\
p\mbox{WAVE2} + \mbox{Arp2/3} &\overset{k_{27}}{\underset{k_{28}} {\rightleftharpoons}} p\mbox{WAVE2/Arp2/3} \\
p\mbox{WAVE2/Arp2/3} &\overset{k_{29}}{\underset{k_{30}} {\rightleftharpoons}} p\mbox{WAVE2} + p\mbox{Arp2/3} \\
p\mbox{Arp2/3} + \mbox{Actin}_{\mbox{\tiny off}}  &\overset{k_{31}}{\underset{k_{32}} {\rightleftharpoons}}  \mbox{Actin}_{\tiny \mbox{on}} \\
\end{split}
\end{equation}

\item 
{Actin depolymerization via a Rho signaling pathway} \\

ROCK, a Rho effector, regulates LIM kinase (LIMK) \cite{Ohashi2000, Yoshioka2003}. Then, LIMK phosporylates cofilin to deactivate it \cite{Arber1998, Yoshioka2003}. Active cofilin severs actin filaments to produce free barbed ends. 

\begin{equation} \nonumber
\begin{split}
p\mbox{Rho} + \mbox{ROCK}  &\overset{k_{33}}{\underset{k_{34}} {\rightleftharpoons}}  p\mbox{Rho/ROCK}\\
p\mbox{Rho/ROCK}  &\overset{k_{35}}{\underset{k_{36}} {\rightleftharpoons}}  p\mbox{Rho} + p\mbox{ROCK}\\
p\mbox{ROCK} + \mbox{LIMK} &\overset{k_{37}}{\underset{k_{38}} {\rightleftharpoons}} p\mbox{ROCK/LIMK} \\
p\mbox{ROCK/LIMK} &\overset{k_{39}}{\underset{k_{40}} {\rightleftharpoons}} p\mbox{ROCK} + p\mbox{LIMK} \\
p\mbox{LIMK} + \mbox{Cofilin} &\overset{k_{41}}{\underset{k_{42}} {\rightleftharpoons}} p\mbox{LIMK/Cofilin} \\
p\mbox{LIMK/Cofilin} &\overset{k_{43}}{\underset{k_{44}} {\rightleftharpoons}} p\mbox{LIMK} + p\mbox{Cofilin} \\
\mbox{Cofilin} + \mbox{Actin}_{\mbox{\tiny on}}  &\overset{k_{45}}{\underset{k_{46}} {\rightleftharpoons}}  \mbox{Cofilin/Actin}_{\tiny \mbox{on}} \\
\mbox{Cofilin/Actin}_{\mbox{\tiny on}}  &\overset{k_{47}}{\underset{k_{48}} {\rightleftharpoons}}  \mbox{Cofilin} + \mbox{Actin}_{\tiny \mbox{off}}. \\
\end{split}
\end{equation}
\end{enumerate}

\subsection{Modeling with a reaction-diffusion system} 

We model the spatio-temporal chemical signaling in the EphA2/Ephrin-A1 -- f-actin network as a reaction-diffusion system. 
Let $[\bold{A}]$ denote the concentration level of a species $\bold{A}$. 
We set
\begin{equation} \nonumber
\begin{split}
&u_1 = [\mbox{EphA2}], \; u_2 = [\mbox{Rho}], \; u_3 = [p\mbox{Rho}], u_4 = [\mbox{Rac1}], u_5 = [p\mbox{Rac1}], \; u_6 = [\mbox{ROCK}], \\
& u_7 = [p\mbox{ROCK}], \;  u_8 = [\mbox{U}], \;  u_9 = [p\mbox{U}], \; u_{10} = [\mbox{ARHGAP22}], \;  u_{11} = [p\mbox{ARHGAP22}], \\
& u_{12} = [\mbox{WAVE2}], \;  u_{13} = [p\mbox{WAVE2}], \; u_{14} = [\mbox{Arp2/3}], \; u_{15} = [p\mbox{Arp2/3}],  \; u_{16} =[\mbox{LIMK}],\\
&  u_{17} = [p\mbox{LIMK}], \; u_{18} = [\mbox{Cofilin}], \;  u_{19} = [p\mbox{Cofilin}], \;  u_{20} = [\mbox{Actin}_{\tiny \mbox{off}}], \; u_{21} = [p\mbox{EphA2/Ephrin-A1}], \\  &u_{22} = [p\mbox{ROCK/U}],\;  u_{23} = [p\mbox{U/ARHGAP22}],\;  u_{24} =  [p\mbox{ARHGAP22/Rac1}], \\
&  u_{25} = [p\mbox{Rac1/WAVE2}], \;  u_{26} = [p\mbox{WAVE2/Arp2/3}], \; u_{27} = [p\mbox{Rho/ROCK}], \\
& u_{28} = [p\mbox{ROCK/LIMK}], \;  u_{29} = [p\mbox{LIMK/Cofilin}],\; u_{30} = [\mbox{Cofilin/Actin}_{\tiny \mbox{on}}],\\
& u_{31} = [p\mbox{EphA2/Ephrin-A1/Rho}],\; u_{32} = [p\mbox{EphA2/Ephrin-A1/Rac1}], \; u_{33} = [\mbox{Actin}_{\tiny \mbox{on}}].
\end{split}
\end{equation}
The data of $v = [\mbox{Ephrin-A1}]$ is given (Figure \ref{fig:Ephrin}).
Then, we can write reaction functions $r_1, \cdots, r_{33}$ as follows: 
\begin{equation}\nonumber
\begin{split}
r_1(u,k) &= -k_1 v u_1 + k_2 u_{21}\\
r_2(u,k) &= -k_3 u_{21} u_2 + k_4 u_{31}\\ 
r_3(u,k) &= k_5 u_{31} - k_6 u_{21} u_3 - k_{33} u_3 u_{6} + k_{34} u_{27} + k_{35} u_{27} - k_{36} u_3 u_{7} \\
r_4(u,k) &= -k_7 u_{21} u_4 + k_8 u_{32} + k_{21} u_{24} - k_{22} u_{11} u_4\\
r_5(u,k) &= k_9 u_{32} - k_{10} u_{21} u_5  - k_{19} u_5 u_{11} + k_{20} u_{24} - k_{23} u_{5} u_{12} + k_{24} u_{25}  + k_{25} u_{25}  - k_{26} u_{5} u_{13} \\
r_6(u,k) &= -k_{33} u_3 u_6  + k_{34} u_{27} \\
r_7(u,k) &= -k_{11} u_{7} u_8  + k_{12} u_{22} + k_{13} u_{22} - k_{14} u_7 u_{9}+ k_{35} u_{27}\\
&  - k_{36} u_3 u_7 - k_{37} u_7 u_{16} + k_{38} u_{28} + k_{39} u_{28} - k_{40} u_7 u_{17} \\
r_8(u,k) &= -k_{11} u_7 u_8 + k_{12} u_{22} \\
r_9(u,k) &= k_{13} u_{22} - k_{14} u_7 u_9 - k_{15} u_9 u_{10} + k_{16} u_{23} + k_{17} u_{23} - k_{18} u_9 u_{11} \\
r_{10}(u,k) &= -k_{15} u_9 u_{10} + k_{16} u_{23} \\
r_{11}(u,k) &= k_{17} u_{23}  - k_{18} u_9 u_{11} - k_{19} u_{11} u_{5} + k_{20} u_{24} + k_{21} u_{24} - k_{22} u_{11} u_{4} \\
r_{12}(u,k) &= -k_{23} u_{5} u_{12} + k_{24} u_{25}\\
r_{13}(u,k) &= k_{25} u_{25} - k_{26} u_{5} u_{13}  - k_{27} u_{13} u_{14} + k_{28} u_{26} + k_{29} u_{26} - k_{30} u_{13} u_{15}\\
r_{14}(u,k) &= -k_{27} u_{13} u_{14} + k_{28} u_{26} \\
r_{15}(u,k) &= k_{29} u_{26}  - k_{30} u_{13} u_{15} - k_{31} u_{15} u_{20} + k_{32} u_{33} \\
r_{16}(u,k) &= -k_{37} u_7 u_{16}  + k_{38} u_{28}\\
r_{17}(u,k) &= k_{39} u_{28}  - k_{40} u_{7} u_{17} - k_{41} u_{17} u_{18} + k_{42} u_{29} + k_{43} u_{29} - k_{44} u_{17} u_{19} \\
r_{18}(u,k) &= -k_{41} u_{17} u_{18} + k_{42} u_{29} - k_{45} u_{18}u_{33} + k_{46} u_{30}+k_{47} u_{30} - k_{48} u_{18} u_{20}\\
r_{19}(u,k) &= k_{43} u_{29} - k_{44} u_{17} u_{19}\\
r_{20}(u,k) &= -k_{31} u_{15} u_{20} + k_{32} u_{33} + k_{47} u_{30} - k_{48} u_{18} u_{20} \\
\end{split} 
\end{equation}
\begin{equation}\nonumber
\begin{split}
r_{21}(u,k) &= k_1 v u_1 - k_2 u_{21} - k_3 u_{21} u_{2} + k_4 u_{31} + k_5 u_{31}-k_6 u_{21} u_{3} - k_7 u_{21} u_{4}  + k_8 u_{32} +k_9 u_{32} \\
& - k_{10} u_{21} u_{5}\\
r_{22}(u,k) &= k_{11} u_{7} u_8 - k_{12} u_{22} - k_{13} u_{22} + k_{14} u_7 u_9\\ 
r_{23}(u,k) &= k_{15} u_9 u_{10} - k_{16} u_{23}  - k_{17} u_{23}  + k_{18} u_9 u_{11}\\
r_{24}(u,k) &= k_{19} u_{11} u_5 - k_{20} u_{24} - k_{21} u_{24} + k_{22} u_{11} u_4\\
r_{25}(u,k) &= k_{23} u_5 u_{12} - k_{24} u_{25}  - k_{25} u_{25} + k_{26} u_5 u_{13}\\
r_{26}(u,k) &= k_{27} u_{13} u_{14}  - k_{28} u_{26} - k_{29} u_{26} + k_{30} u_{13} u_{15}\\
r_{27}(u,k) &= k_{33} u_{3} u_{6}  - k_{34} u_{27} - k_{35} u_{27} + k_{36} u_{3} u_{7}\\
r_{28}(u,k) &= k_{37} u_{7} u_{16} - k_{38} u_{28} - k_{39} u_{28} + k_{40} u_{7} u_{17} \\
r_{29}(u,k) &= k_{41} u_{17}u_{18} - k_{42} u_{29} - k_{43} u_{29}  + k_{44} u_{17} u_{19}\\
r_{30}(u,k) &= k_{45} u_{18} u_{33} - k_{46} u_{30} - k_{47} u_{30} + k_{48} u_{18} u_{20} \\
r_{31}(u,k) &= k_{3} u_{21} u_{2}  - k_{4} u_{31} - k_{5} u_{31}  + k_{6} u_{21}u_{3} \\
r_{32}(u,k) &= k_{7} u_{21} u_{4}  - k_{8} u_{32} - k_{9} u_{32}  + k_{10} u_{21}u_{5} \\
r_{33}(u,k) &= k_{31} u_{15}u_{20} - k_{32} u_{33}   - k_{45} u_{18} u_{33} + k_{46} u_{30}.
\end{split} 
\end{equation}

\subsection{Range of parameters and initial values}

Membrane-bound proteins diffuse more slowly than proteins in the cytosol. 
We choose the lower bound and the upper bound of mass diffusivities as
\begin{equation} \label{d_range1}
d_i^{L} = 0.001 \: \mu m^2 s^{-1}, \quad \mbox{and} \quad d_i^{U} = 0.1 \: \mu m^2 s^{-1},
\end{equation}
for chemical species bound in membrane such as EphA2, active Rho family GTPases  and their complexes, i.e., $i=1,3,5,21,24,25,27,31,32$, and
\begin{equation} \label{d_range2}
d_i^{L} = 0.1 \: \mu m^2 s^{-1}, \quad \mbox{and} \quad d_i^{U} = 1 \: \mu m^2 s^{-1},
\end{equation}
for chemical species in the cytosol. Here, we also assume that the assembled actin diffuses slowly by setting $d_{33} = 10^{-16}\: \mu m^2 s^{-1}$.

We use a non-dimensionalized concentration and the following range of rate constants:
\begin{equation} \label{k_range1}
k_{i}^{L}  = 10^{-3} \: s^{-1}, \quad \mbox{and} \quad k_{i}^{U} = 10 \: s^{-1}.
\end{equation}
for forward reactions, and
\begin{equation} \label{k_range2}
k_{i}^{L}  = 10^{-7} \: s^{-1}, \quad \mbox{and} \quad k_{i}^{U} = 10^{-3} \: s^{-1}.
\end{equation}
for backward reactions. 

The lower and upper bounds for the initial (non-dimensionalized) concentration levels are chosen as
\begin{equation} \label{I_range}
I_{i}^{L} = 10^{-4}, \quad \mbox{and} \quad I_{i}^{U} = 1
\end{equation}
for $i = 1,\cdots,32$.

%


\newpage

\begin{figure*}[h]
\begin{center}
\includegraphics[width = 6in]{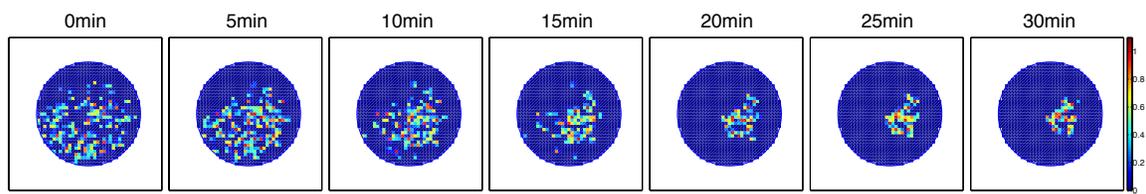}
\caption{Ephrin distribution data in an MDA-MB-231 cell (courtesy of Jay T. Groves.)
\label{fig:Ephrin}}
\end{center}
\end{figure*}

\begin{figure*}[h]
\begin{center}
\includegraphics[width = 6in]{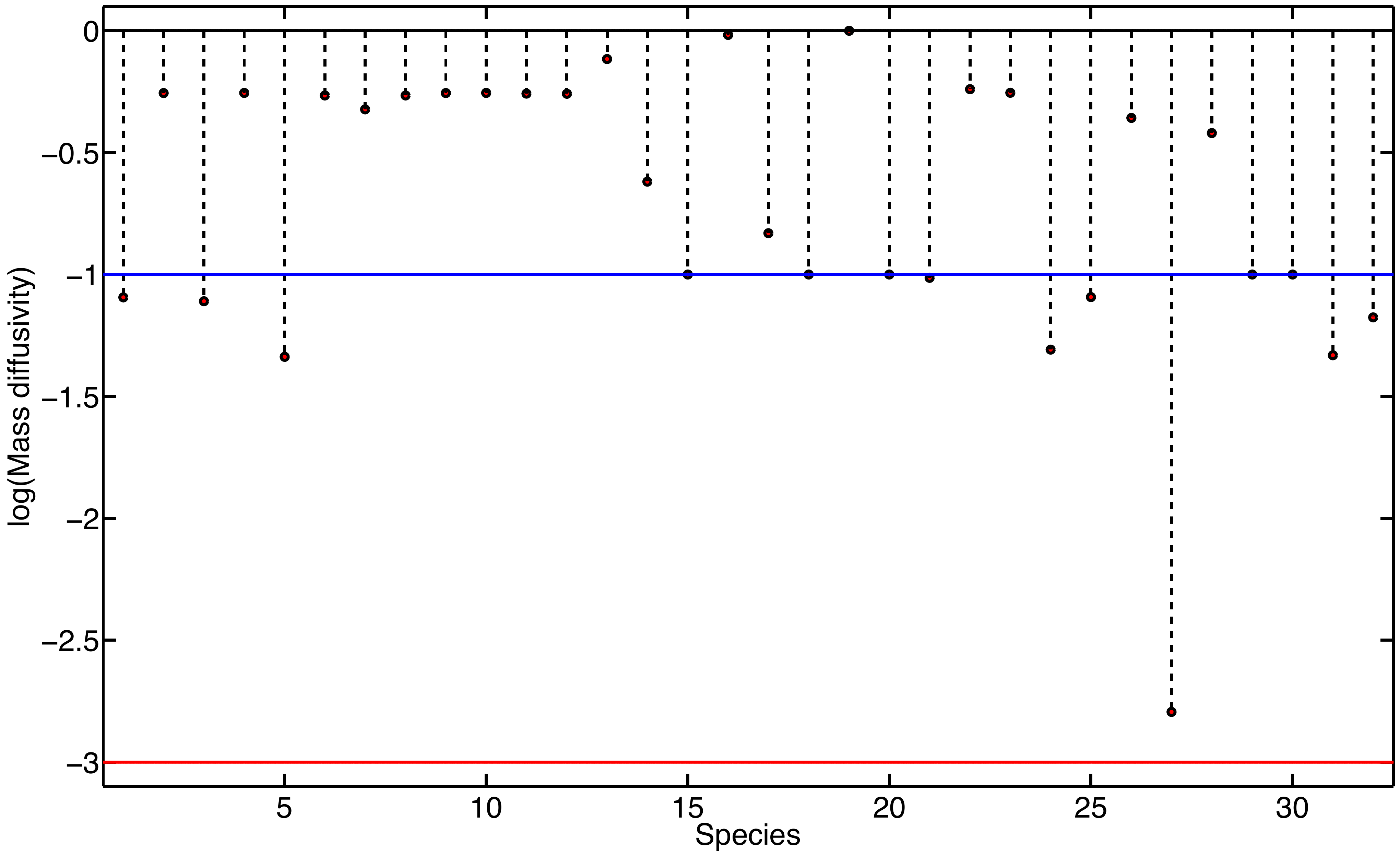}
\caption{Identified $32$ mass diffusivities in log-scale.
The black, blue and red horizontal lines specify the bounds of the mass diffusivities  assumed in \eqref{d_range1} and \eqref{d_range2}. 
Note that $d_1, d_3, d_5, d_{21}, d_{24}, d_{25}, d_{27}, d_{31}, d_{32}$ are within the bounds \eqref{d_range1} and other mass diffusivities are in the bounds \eqref{d_range2}. 
\label{fig:diffusion}}
\end{center}
\end{figure*}

\begin{figure*}[h]
\begin{center}
\includegraphics[width = 6in]{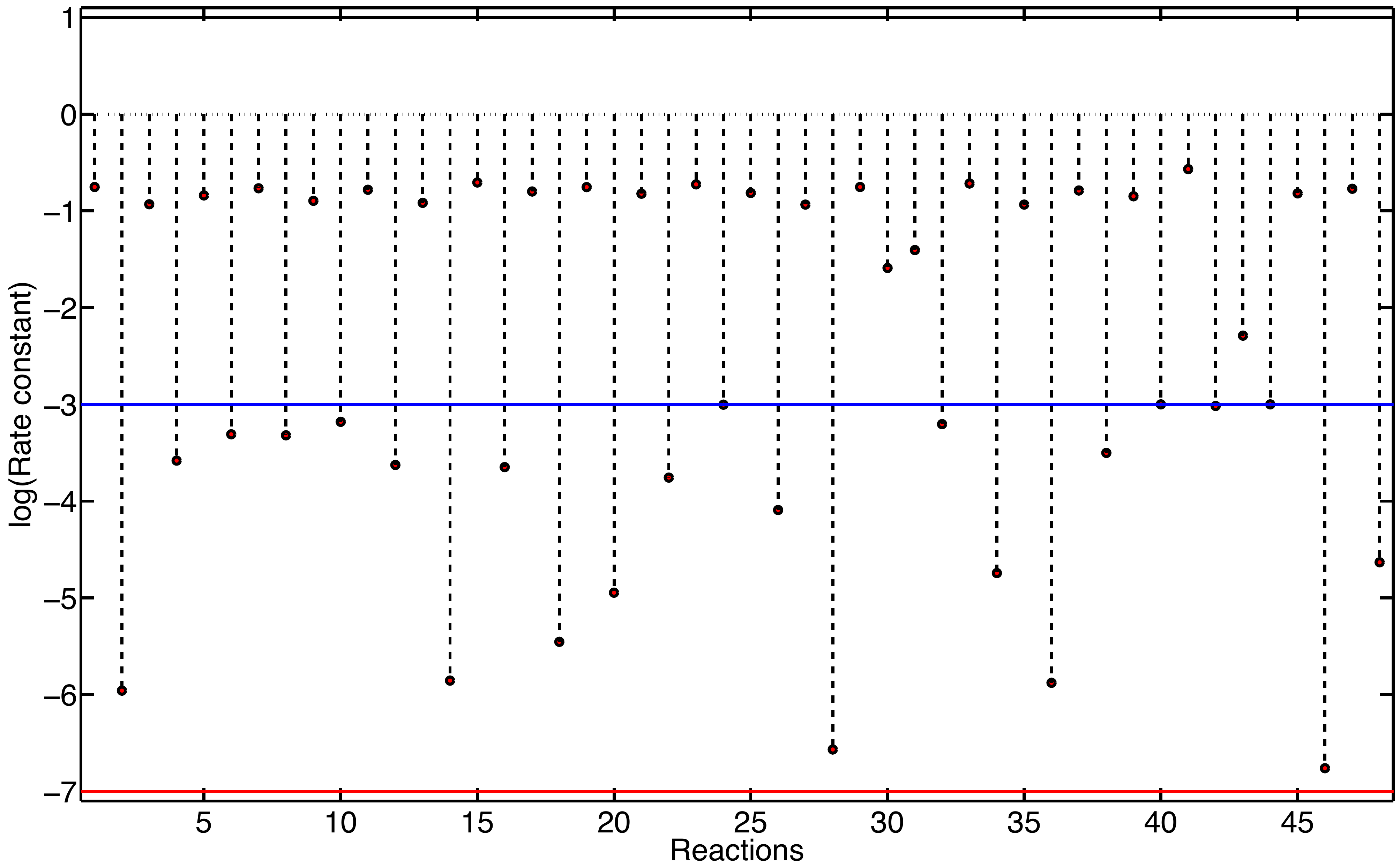}
\caption{Identified $48$ rate constants in log-scale. The black, blue and red horizontal lines specify the bounds of the rate constants  assumed in \eqref{k_range1} and \eqref{k_range2}.
Note that forward and the backward reaction rate constants are within the bounds \eqref{k_range1} and \eqref{k_range2}, respectively.
\label{fig:reaction}}
\end{center}
\end{figure*}

\begin{figure*}[h]
\begin{center}
\includegraphics[width = 6in]{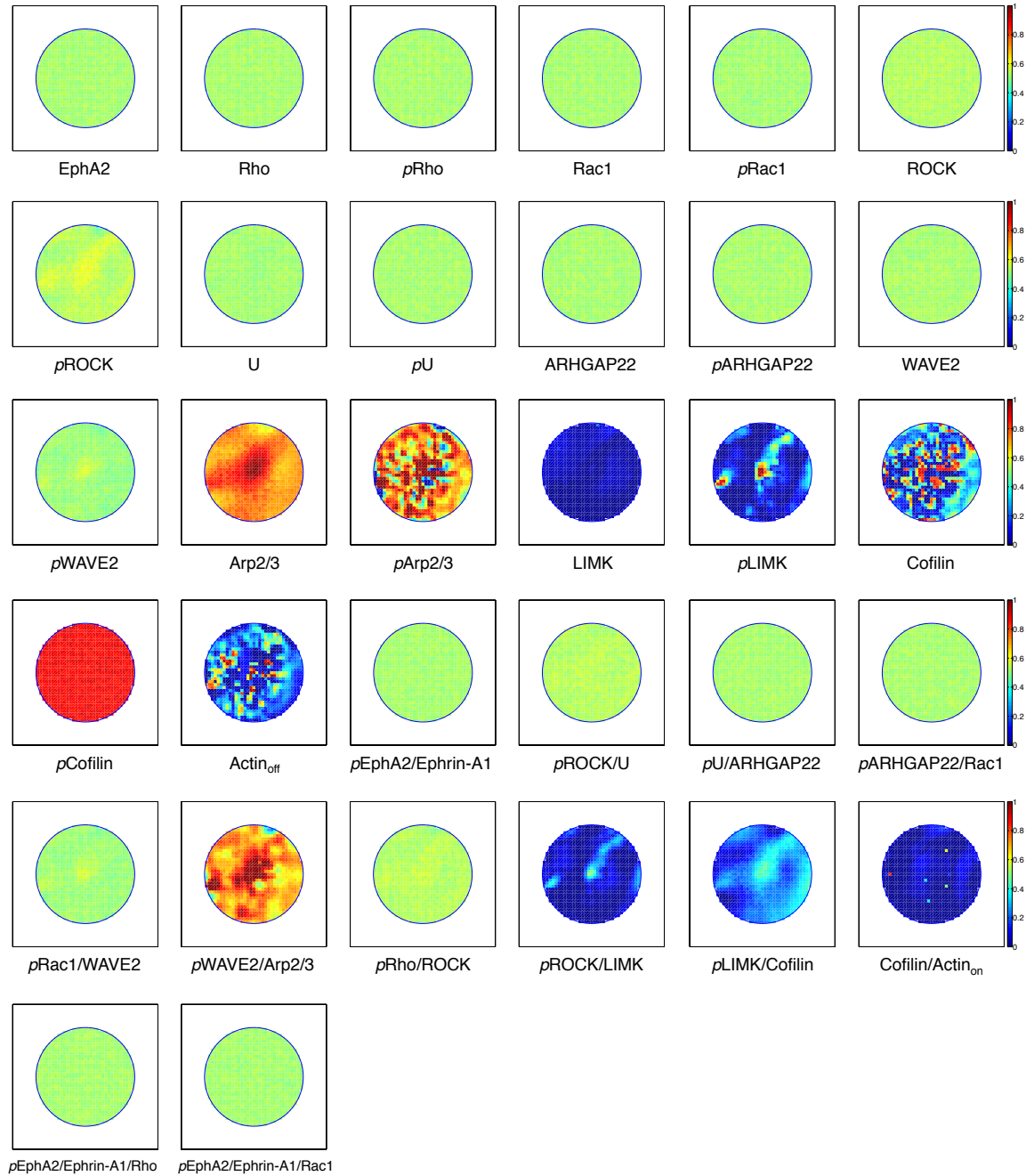}
\caption{Identified $32$ initial values.
Note that the initial values are within the range \eqref{I_range}.
\label{fig:initial}}
\end{center}
\end{figure*}

\clearpage
\bibliographystyle{iEEEtranS}
\bibliography{IDRDNet}

\end{document}